\theoremstyle{remark}
\newcolumntype{P}[1]{>{\centering\arraybackslash}p{#1}}
\def\R{\mathbb{R}}
\def\cN{\mathcal{N}}
\def\cA{\mathcal{A}}
\def\cK{\mathcal{K}}
\def\cT{\mathcal{T}}
\def\cE{\mathcal{E}}
\def\p{\partial}
\def\[{\partial}
\def\O{\Omega}
\def\ssT{{\scriptscriptstyle T}}
\def\HT{{H^2(\O,\cT_h)}}
\def\mean#1{\left\{\hskip -5pt\left\{#1\right\}\hskip -5pt\right\}}
\def\jump#1{\left[\hskip -3.5pt\left[#1\right]\hskip -3.5pt\right]}
\def\smean#1{\{\hskip -3pt\{#1\}\hskip -3pt\}}
\def\sjump#1{[\hskip -1.5pt[#1]\hskip -1.5pt]}
\def\jumptwo{\jump{\frac{\p^2 u_h}{\p n^2}}}
\def\b#1{\boldsymbol{#1}}
\def\norm #1{{\left\vert\kern-0.25ex\left\vert\kern-0.25ex\left\vert #1 
    \right\vert\kern-0.25ex\right\vert\kern-0.25ex\right\vert}}
\theoremstyle{plain}
\newtheorem{theorem}{Theorem}[section]
\newtheorem{lemma}[theorem]{Lemma}
\newtheorem{example}[theorem]{Example}
\theoremstyle{remark}
\newtheorem{remark}[theorem]{Remark}
\begin{document}
\allowdisplaybreaks[4]
\numberwithin{figure}{section}
\numberwithin{table}{section}
 \numberwithin{equation}{section}
%
\title[ Adaptive modified weak galerkin method for Obstacle Problem]
 {Adaptive modified weak galerkin method for Obstacle Problem}
\author{Tanvi Wadhawan}\thanks{}
\address{Department of Mathematics, Harish Chandra Research Institute, Prayagraj- 211019}
\email{tanviwadhawan1234@gmail.com}
\date{}

\begin{abstract}
This article introduces a novel residual-based \textit{a posteriori} error estimators for the Modified Weak Galerkin (MWG) finite element method applied to the obstacle problem. To the best of the author's knowledge, this work represents the first integration of the MWG method into an adaptive finite element framework for variational inequalities. The proposed error estimators is rigorously proven to be both reliable and efficient in quantifying the approximation error, measured in a natural energy norm. A key aspect of the analysis involves decomposing the discrete solution into conforming and non-conforming components, which plays a central role in the error estimation process. Numerical experiments are conducted to validate the theoretical findings, demonstrating the reliability and efficiency of the proposed error estimator.
\end{abstract}
\keywords{ Obstacle problem;  Modified weak Galerkin method; \textit{a posteriori} error analysis; Variational inequalities; }
%
%
\maketitle
\allowdisplaybreaks
\def\R{\mathbb{R}}
\def\cA{\mathcal{A}}
\def\cK{\mathcal{K}}
\def\cN{\mathcal{N}}
\def\p{\partial}
\def\O{\Omega}
\def\bbP{\mathbb{P}}
\def\cV{\mathcal{V}}
\def\cM{\mathcal{M}}
\def\cT{\mathcal{T}}
\def\cE{\mathcal{E}}
\def\bF{\mathbb{F}}
\def\bC{\mathbb{C}}
\def\bN{\mathbb{N}}
\def\ssT{{\scriptscriptstyle T}}
\def\HT{{H^2(\O,\cT_h)}}
\def\mean#1{\left\{\hskip -5pt\left\{#1\right\}\hskip -5pt\right\}}
\def\jump#1{\left[\hskip -3.5pt\left[#1\right]\hskip -3.5pt\right]}
\def\smean#1{\{\hskip -3pt\{#1\}\hskip -3pt\}}
\def\sjump#1{[\hskip -1.5pt[#1]\hskip -1.5pt]}
\def\jumptwo{\jump{\frac{\p^2 u_h}{\p n^2}}}




\section{\textbf{Introduction}}

The obstacle problem is a fundamental mathematical model with the broad applications in mathematics, engineering, physics, thermostatics etc. It is formulated as a contact problem that seeks to determine the equilibrium configuration of an elastic membrane constrained to lie above a prescribed barrier while its boundary remains fixed. Since the contact region between the membrane and the barrier is not known \textit{a priori}, the obstacle problem belongs to the class of free boundary problems. In general, the domain of the obstacle problem can be divided into three distinct regions: the contact region, the non-contact region, and the free boundary, each of which influences the regularity of the solution. The obstacle problem serves as a prototype for variational inequalities of the first kind, where the problem is posed over a non-empty, closed, convex set.

In the 1960s, G. Stampacchia made pioneering contributions to the theory of elliptic obstacle problem \cite{1963:Stampacchia}, providing foundational results on the existence of solution, its properties, and the characterization of free boundary regions. Building on this groundwork, we consider a model obstacle problem defined on a bounded polygonal domain \( \Omega \subset \mathbb{R}^2 \) with boundary \( \Gamma \). The weak formulation of this problem seeks a function \( u \in \mathcal{K} \) such that
\begin{align}\label{equation1_obs}
\int\limits_{\Omega} \nabla u \cdot \nabla (v - u) \, dx &\geq \int\limits_{\Omega} f (v - u) \, dx, \quad \forall~ v \in \mathcal{K},
\end{align}
where the non-empty, closed, convex set \( \mathcal{K} \) is given by
\begin{align*}
\mathcal{K} &= \{ v \in V:=H^1_0(\Omega) : v \geq \psi \ \text{a.e. in} \ \Omega \},
\end{align*}
with \( f \in L^2(\Omega) \) representing the force term and \( \psi \in H^2(\Omega) \) serving as the obstacle function, satisfying \( \psi \leq 0 \) on \( \Gamma \). 

This contact problem is a versatile model with applications in membrane deformation in elasticity, non-parametric minimal surface problems, and capillary surface modeling. It is also widely used to describe phenomena like elasto-plastic torsion and cavitation in lubrication theory, which are categorized as obstacle-type problems.

To better understand the physical context, consider a homogeneous elastic membrane occupying the domain $\Omega$, subjected to a vertical load represented by $f$. The deflection of the membrane is constrained by a rigid obstacle $\psi$. The equilibrium configuration of the elastic membrane is achieved by finding a function \( u \in \mathcal{K} \) that minimizes the total potential energy, given by the functional:
\begin{align}\label{minimiz}
J(v) &= \frac{1}{2} a(v, v) - L(v), \quad v \in \mathcal{K},
\end{align}
where 
\begin{align*}
a(w, v) &:= \int\limits_\Omega \nabla w \cdot \nabla v~dx, \quad L(v) := \int_{\Omega} f v \, dx.
\end{align*}
From the calculus of variations \eqref{minimiz} is equivalent to the variational inequality \eqref{equation1_obs}. Using the Poincar$\acute{e}$ inequality \cite{BScott:2008:FEM} and the Cauchy Schwarz inequality, the following properties can be established for any \( v \in V \):
\begin{align}\label{eq:bounded1}
a(v, v) \geq \alpha \|v\|^2_{H^1(\Omega)},~~|L(v)| \leq \|f\|_{L^2(\Omega)} \|v\|_{L^2(\Omega)},
\end{align}
where \( \alpha > 0 \) is ellipticity constant. Additionally, for any \( w, v \in V \), we have:
\begin{align}
|a(w, v)| &\leq |w|_{H^1(\Omega)} |v|_{H^1(\Omega)}. \label{eq:bounded2}
\end{align}
Owing to the boundedness \eqref{eq:bounded2} and ellipticity \eqref{eq:bounded1} of the bilinear form $a(u,v)$, along with the continuity of the linear functional $L(v)$, the existence and uniqueness of the solution to the variational inequality \eqref{equation1_obs} are established by the classical Lions and Stampacchia theorem \cite{AH:2009:VI, Ciarlet:1978:FEM}.
\par 
\noindent 
Finite element methods have been extensively developed and analyzed to approximate solutions to variational problems. In this context, the regularity of the solution is crucial for understanding the qualitative behaviour of the approximation. However, irregularities can arise due to factors such as free boundaries or reentrant corners within the domain.  Due to the inherent non-smooth and non-linear nature of variational inequalities, there has been a growing interest in \textit{a posteriori} error control for
these variational inequalities and discretization methods in recent years. A key aspect of adaptive finite element methods is the development of \textit{a posteriori} error estimators, which link the error to computable variables derived from the discrete solution and available data. To reduce computational costs while maintaining accuracy, adaptive mesh generation has proven to be a highly effective strategy. The \textit{a priori} error analysis of the obstacle problem has been extensively studied in several notable works \cite{Falk:1974:VI, Glowinski:2008:VI, AH:2009:VI, quad:2002}. Similarly, significant contributions to the \textit{a posteriori} error analysis can be found in \cite{BS:2015:hpAdapt, Kporwal:2014:quad, TG:2014:VIDG1, TG:2014:VIDG, Veeser:2001:VI, WHE:2015:ApostDG} and the references therein.
\par
\noindent
Unlike the standard finite element method, the Weak Galerkin (WG) method, introduced by Wang and Ye \cite{Wang_2013} utilizes weak functions as the approximate function space and replaces the classical gradient operator with a weak gradient operator. A weak function $v=\{v_0, v_b\}$ is defined such that $v_0$ represents the value of $v$ in the interior of elements while $v_b$ represents its value on the boundary of elements. This distinct approach enables the WG method to define approximate functions separately within the interior and on the boundary of elements. One of the key features of the WG method is its ability to incorporate entirely discontinuous finite element functions, where the value of a weak function on an element's boundary does not necessarily coincide with its trace. This flexibility provides a distinct advantage over the closely related Discontinuous Galerkin (DG) finite element methods. Over time, WG methods have been further developed for solving a variety of equations, including the Helmholtz equation \cite{Mu:2015:Helmholtz, Du:2017:hemloboltz}, biharmonic equations \cite{Mu:2014:biharmonic, Wang:2013:biharmonic, Mu:2014:biharmonic1}, Stokes equations \cite{Liu:2018:Stokes, Hu:2019:Stokes, Wang:2016:Stokes}, and Maxwell equations \cite{Mu:2015:maxwell}.

To address the challenge of excessive degrees of freedom in the primal formulation of the original WG method, Wang, Malluwawadu, Gao, and McMillan in \cite{Wang_2014} introduced a modified approach called the Modified Weak Galerkin method. This modification incorporates a newly defined weak derivative by replacing $v_b$ by average of $v_0$, thereby reducing the number of unknowns compared to the WG method while preserving the same level of accuracy. The MWG method also shares the same degrees of freedom as the interior penalty discontinuous Galerkin  method. However, a notable advantage of the MWG method over the interior penalty DG method is that it does not require large stabilization parameters to ensure stability. The MWG method has also been successfully applied to solve several problems, such as Signorini and obstacle problems \cite{Zeng:2017:modobstacle}, Stokes problems \cite{Tian:2018}, reaction-diffusion equations \cite{Li:2022, Gao:2020}, and biharmonic equations \cite{Cui: 2021: biharmonicmod}.
\par 
\noindent
The primary research on adaptive MWG methods has been focused on elliptic variational equalities. For example, Zhang and Lin in \cite{Zhang:2017} proposed a residual-type error estimator and demonstrated global upper and lower bounds for the MWG method applied to second-order elliptic problems. Additionally, Xie, Cao, Chen, and Zhong \cite{Yingying:2023} introduced a new MWG finite element method for second-order elliptic problems, simplifying the bilinear form without the need for a skeletal variable. They established both convergence and quasi-optimality for the adaptive MWG method.  Furthermore, Xie et al. designed a residual-type error estimator for the MWG method applied to 2D $\b{H}$(curl) elliptic problems in \cite{Xie: 2023:modified}.This work fills a gap in the literature by pioneering the application of the adaptive Modified Weak Galerkin (MWG) method to variational inequalities. The primary challenge addressed here stems from the inherent non-linearity of obstacle problems and the non-consistency of the MWG method. Building on a novel approach, an \textit{a posteriori} error analysis is presented for the MWG finite element method, incorporating the contact condition through integral constraints. This framework aims to advance the understanding and numerical treatment of such problems. A key aspect of the approach involves decomposing the discrete solution into conforming and non-conforming components, followed by the construction of the discrete Lagrange multiplier $\lambda_h(u_h)$. This multiplier is defined as a constant polynomial over each element of the domain, utilizing the mapping $\pi^0_h$. This construction is crucial for ensuring the sign properties required to establish reliability estimates.
\par 
\noindent
The structure of the remainder of the paper is as follows: In Section 2, we introduce the necessary notations and preliminary results, followed by a detailed formulation of the MWG method for problem \eqref{equation1_obs}. Section 3 focuses on defining an auxiliary functional $\lambda(u)$ in the space $H^{-1}(\Omega)$, associated with the residual of the exact solution $u$ relative to the continuous variational formulation \eqref{equation1_obs}. Additionally, we construct the discrete Lagrange multiplier $\lambda_h(u_h)$  as a counterpart to $\lambda(u)$ in a suitable space and establish its crucial sign properties, which underpin the subsequent analysis. Section 4 is dedicated to the \textit{a posteriori} error analysis, where we define the residual functional and present the main theoretical results demonstrating the reliability of the error estimator and proving its efficiency. Finally, in Section 5, we provide numerical experiments in two dimensions to validate the theoretical findings and illustrate the performance of the \textit{a posteriori} error estimators.

\section{\textbf{The MWG Method for Obstacle Problem}}\label{sec2}
\subsection{Preliminaries}
To describe the MWG method, we first revisit the concept of weak functions and their weak derivatives as outlined in the article \cite{Wang_2013}.
\begin{itemize}
\item \textbf{Weak Function:} A weak function on domain \( D \) is defined as a pair \( v = \{v_0, v_b\} \), where \( v_0 \in L^2(D) \) represents the value of \( v \) in the interior of \( D \), and \( v_b \in L^2(\partial D) \) denotes its value on the boundary \( \partial D \). It is important to note that \( v_b \) does not necessarily corresponds to the trace of \( v_0 \) on \( \partial D \), even when the trace of \( v_0 \) exists.

\item \textbf{Weak Derivative:} For a weak function $v$, 
the weak derivative $\partial^w_{x_i} v$ of a weak function \( v \) with respect to the variable \( x_i \) is defined as a linear functional in the dual space \( H^{-1}(D) \), whose action on each \( q \in H^1(D) \) is given by
\begin{align}
    \int_D \partial^w_{x_i} v \, q \, dx &= -\int_D v_0 \, \partial_{x_i} q \, dx 
    + \int_{\partial D} v_b \, q \, n_i \, ds, \quad \forall~q \in H^1(D), \; 1 \leq i \leq 2, \tag{2.3}
\end{align}
where $\b{n} =  (n_1, n_2)$ is the outward unit normal vector on \( \partial D \).
\end{itemize}

With the foundational concepts of weak functions and their weak derivatives established, we now extend our focus to their discrete counterparts, which are crucial in formulating the modified weak finite element method. To introduce these discrete analogues with clarity and precision, we first present some prerequisite notations and preliminary results.

\par 
For a given mesh parameter \( h > 0 \), let \( \mathcal{T}_h \) denote the partition of \( \Omega \) into regular triangles, as described in \cite{Ciarlet:1978:FEM}, such that
\begin{align*}
\overline{\Omega} = \bigcup_{T \in \mathcal{T}_h} T.
\end{align*}
For each element \( T \in \mathcal{T}_h \), where \( r \) is a non-negative integer, let \( \mathbb{P}_r(T) \) represents the space of polynomials of degree at most \( r \) on \( T \). 

With respect to the triangulation \( \mathcal{T}_h \), we define the following sets:
\begin{itemize}[noitemsep]
    \item \( \mathcal{T}_p \): the collection of all elements sharing a common node \( p \),
    \item \( h_p \): the diameter of \( \mathcal{T}_p \),
    \item \( h : \max\{h_T : T \in \mathcal{T}_h\} \), where \( h_T \) is the diameter of element \( T \),
     \item \( h_e \): the length of edge \( e \),
    \item \( \mathcal{T}_e \): the set of triangles that share a common edge \( e \),
    \item \( \mathcal{V}_h \): the set of all vertices in the triangulation \( \mathcal{T}_h \), which is further categorized into:
    \begin{itemize}[noitemsep]
        \item \( \mathcal{V}_h^b \): boundary vertices lying on the boundary of \( \Omega \),
        \item \( \mathcal{V}_h^i \): interior vertices strictly inside the domain \( \Omega \).
    \end{itemize}
    \item \( \mathcal{V}_T \): the set of vertices of element \( T \),
    \item \( \mathcal{V}_e \): the set of vertices on edge \( e\),
    \item \( \mathcal{E}_h \): the set of all edges in \( \mathcal{T}_h \), which is decomposed as:
    \begin{itemize}[noitemsep]
        \item \( \mathcal{E}_h^i \): the set of interior edges of \( \mathcal{T}_h \),
        \item \( \mathcal{E}_h^b \): the set of boundary edges of \( \mathcal{T}_h \),
        \item \( \mathcal{E}_p \): the set of all edges sharing the node \( p \).
    \end{itemize}
\end{itemize}

The cardinality of any set \( S \) is denoted by \( |S| \), and the notation \( X' \) refers to the dual space of a Banach space \( X \). For a domain \( D \subset \Omega \) and an integer \( m \), the Sobolev space \( H^m(D) \) is equipped with the norm \( \|\cdot\|_{H^m(D)} \) and the seminorm \( |\cdot|_{H^m(D)} \). The \( L^2 \) inner product over \( D \) is denoted by \( (\cdot, \cdot)_{L^2(D)} \), where the subscript is omitted when \( D = \Omega \). Additionally, \( C \) represents a generic positive constant that is independent of the mesh size \( h \), and the notation \( X \lesssim Y \) indicates the existence of a positive constant \( C \) such that \( X \leq C Y \).
\par
\noindent
To develop the modified weak Galerkin method for the weak formulation \eqref{equation1_obs} in a systematic manner, we define the following broken Sobolev space:
\begin{align*}
{H^1}(\O, \mathcal{T}_h):= \{ {v} \in {L^2}(\O)|~ {v}_T:={v}|_{T} \in {H^1}(T)~\forall~T~\in~\mathcal{T}_h \},
\end{align*}
\par
\noindent
We now define some additional notations for the jumps and means of discontinuous functions, which will aid in their efficient representation across interfaces. For a scalar-valued function $w \in {H^1}(\O, \mathcal{T}_h) $ and vector-valued function $\b{v} \in [{H^1}(\O, \mathcal{T}_h)]^2$ which are double valued across the inter element boundary $e\in \mathcal{E}_h^i$,  the jumps and averages across the edge $e$ are defined as
\begin{align*}
\smean{w}&:= \dfrac{w|_{T_1} + w|_{T_2}}{2},~~~\sjump{w}:= w|_{T_1}\b{n_1} + w|_{T_2}\b{n_2} ; \\
\smean{\b{v}}&:= \dfrac{\b{v}|_{T_1} + \b{v}|_{T_2}}{2},~~~~\sjump{\b{v}}:= \b{v}|_{T_1}\cdot\b{n_1} + \b{v}|_{T_2}\cdot\b{n_2}; 
\end{align*}
where the edge $e$ is shared by two adjacent elements $T_1$ and $T_2$ and $\b{n_1}$ is the outward unit normal vector pointing from $T_1$ to $T_2$ with $\b{n_2} = -\b{n_1}$.  For the notational convenience,  we also define the jumps and averages on the boundary edge $e\in \mathcal{E}_h^b$ as
\begin{align*}
\smean{w}&:=w,~~~~~~~~~\sjump{w}:= w\b{n_e} ; \\
\smean{\b{v}}&:= \b{v},~~~~~~~~~~\sjump{\b{v}}:= \b{v}\cdot \b{n_e} ; \\
\end{align*}
where $T \in \mathcal{T}_h$ is such that $e \subset \partial T$ and $\b{n_e}$ is the unit normal on the edge $e$ pointing outside $T$. 
\par
\noindent
For the discretization, we introduce the following discrete finite element space to determine the discrete solution:
\begin{align*}
V_h^w &:= \left\{ v_h=(v_h^0, v_h^b)~:~ {v_h^0}|_T \in \mathbb{P}_1(T), v_h^b|_e = \smean{v_h^0}, e\subset \partial T,~ T \in \mathcal{T}_h \right\}, \\
V_h^{0w} &:= \left\{ v_h \in V_h^w,~v_h^b=0~\text{on}~\partial \Omega ~ \right\}.
\end{align*}
Incorporating the contact condition via integral constraints, we define the discrete analogue $\mathcal{K}_h$ of the non-empty, closed, convex set $\mathcal{K}$ as follows:
\begin{align*}
\mathcal{K}_h := \left\{ v_h \in  V_h^{0w} : \int\limits_T v^0_h~dx \geq \int\limits_T \psi~dx~~ \forall~T \in \mathcal{T}_h \right\}.
\end{align*}
It is important to note that for \( v_h = \{v_h^0, v_h^b\} \in V^w_h \), the component \( v_h^b = \smean{v_h^0} \) is not independent, as it is fully determined by \( v_h^0 \). Consequently, the weak finite element space \( V^w_h \) has fewer degrees of freedom, which reduces the number of unknowns in the corresponding weak finite element equation.
\subsection{\textbf{Discrete Weak Gradient}}
In this subsection, we begin by defining the discrete weak gradient used in the MWG method, and then proceed to discuss some important properties of this gradient operator.
\par 
\noindent
\textit{
\textbf{Definition 1} For a given partition $\mathcal{T}_h$ of $\Omega$ and a piecewise smooth function $v$ of~$\Omega,$ the discrete gradient of $v$ on $T$ is a unique element in $\nabla_{w,T}v \in [\mathbb{P}_0(T)]^2$ such that 
\begin{align}\label{weak_gradient}
\int\limits_T \nabla_{w,T} v \cdot \b{q}~dx = −\int\limits_T v~\nabla\cdot \b{q}~dx + \int\limits_{\partial T} \smean{v}\b{q} \cdot \b{n}~ds~~\forall~\b{q} \in [\mathbb{P}_{0}(T)]^2,
\end{align}
which in turn reduces to 
\begin{align}
\int\limits_T \nabla_{w,T} v \cdot \b{q}~dx =  \int\limits_{\partial T} \smean{v}\b{q} \cdot \b{n}~ds~~\forall~\b{q} \in [\mathbb{P}_{0}(T)]^2.
\end{align}}
With slight abuse of notation, we shall denote weak gradient $\nabla_{w,T}v$ as $\nabla_{w}v$ when no ambiguity arises.
\par 
\noindent
Note that when \( v \) is continuous in \( \Omega \), it follows that \( \smean{v} = v \) on \( \partial T \) for all \( T \in \mathcal{T}_h \). Using the definition of \( \nabla_wv \),  we observe
\begin{align*}
\int\limits_T \nabla_{w} v \cdot \b{q}~dx = −\int\limits_T v~\nabla\cdot \b{q}~dx + \int\limits_{\partial T}{v}\b{q} \cdot \b{n}~ds = \int\limits_T \nabla v \cdot \b{q}~dx, \quad \forall~\b{q} \in [\mathbb{P}_{0}(T)]^2,
\end{align*}
which demonstrates that the weak gradient \( \nabla_w v \) coincides with the \( L^2 \)-projection of the classical gradient \( \nabla v \) for such \( v \). Furthermore, if \( v \in \mathbb{P}_k(\Omega) \), we conclude that \( \nabla_w v = \nabla v \).
\par 
\noindent
In the subsequent analysis, we will utilize the following identity:
\begin{align*}
\sum_{T \in \mathcal{T}_h} \int\limits_{\partial T} w \b{q}\cdot\b{n}~ds = \sum_{e \in \mathcal{E}_h^i} \int\limits_e \smean{w}\sjump{\b{q}}~ds + \sum_{e \in \mathcal{E}_h} \int\limits_e \sjump{w}\cdot\smean{\b{q}}~ds. 
\end{align*}
The following lemma establishes an error bound between the discrete weak gradient and the classical gradient for $v_h=(v_h^0, v_h^b) \in V_h^{w}$ which becomes a handy tool in this analysis. 
\begin{lemma}\label{discrete_error}
For any $v_h=(v_h^0, v_h^b) \in V_h^{w},$ the following relation holds
\begin{align*}
\sum_{T \in \mathcal{T}_h} \|\nabla_w v_h - \nabla v_h^0\|^2_{L^2(T)} \leq \sum_{e \in \mathcal{E}_h} \frac{1}{h_e}\int_e \sjump{v^0_h}^2~ds. 
\end{align*}
\end{lemma}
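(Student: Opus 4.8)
The plan is to localise the estimate, derive a single elementwise identity for the gradient defect $\b{g}_T := \nabla_w v_h - \nabla v_h^0$, and then assemble over edges so that the jump appears with the correct scaling. Fix $T\in\mathcal{T}_h$ and test both gradients against an arbitrary constant field $\b{q}\in[\mathbb{P}_0(T)]^2$. Since $v_h^0|_T\in\mathbb{P}_1(T)$, integration by parts gives $\int_T\nabla v_h^0\cdot\b{q}\,dx=\int_{\partial T}v_h^0\,\b{q}\cdot\b{n}\,ds$, whereas the definition of the discrete weak gradient gives $\int_T\nabla_w v_h\cdot\b{q}\,dx=\int_{\partial T}\smean{v_h^0}\,\b{q}\cdot\b{n}\,ds$. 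Subtracting, and then using that $\b{g}_T$ is itself constant on $T$ so that $\b{q}=\b{g}_T$ is admissible, yields
\begin{align*}
\|\b{g}_T\|^2_{L^2(T)}=\int_{\partial T}\big(\smean{v_h^0}-v_h^0\big)\,\b{g}_T\cdot\b{n}\,ds .
\end{align*}

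Next I would identify the boundary factor with the jump. On an interior edge $e\subset\partial T$ shared with $T'$ one has $\smean{v_h^0}-v_h^0|_T=\tfrac12\big(v_h^0|_{T'}-v_h^0|_T\big)$, i.e. $\smean{v_h^0}-v_h^0$ equals, up to sign, $\tfrac12\,\sjump{v_h^0}\cdot\b{n}$, while on boundary edges $\smean{v_h^0}=v_h^0$ makes this contribution vanish. Summing the elementwise identity over $T\in\mathcal{T}_h$ and invoking the edge–assembly identity recorded just before the lemma (with broken field $\b{g}$, $\b{g}|_T=\b{g}_T$, and $w=v_h^0$), the single-valued mean terms $\smean{v_h^0}\,\sjump{\b{g}}$ cancel and the boundary terms drop out, leaving the clean identity
\begin{align*}
\sum_{T\in\mathcal{T}_h}\|\b{g}_T\|^2_{L^2(T)}=-\sum_{e\in\mathcal{E}_h^i}\int_e\smean{\b{g}}\cdot\sjump{v_h^0}\,ds .
\end{align*}
This is the structural heart of the argument: it is exact, each edge is counted once, and it relates the defect energy to the averaged field $\smean{\b{g}}$ paired against the jump.

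From here I would apply the Cauchy–Schwarz inequality edgewise with the balancing weights $h_e^{\pm1/2}$,
\begin{align*}
\sum_{T\in\mathcal{T}_h}\|\b{g}_T\|^2_{L^2(T)}\le\Big(\sum_{e\in\mathcal{E}_h^i}h_e\!\int_e|\smean{\b{g}}|^2\,ds\Big)^{1/2}\Big(\sum_{e\in\mathcal{E}_h}\frac{1}{h_e}\!\int_e\sjump{v_h^0}^2\,ds\Big)^{1/2},
\end{align*}
where the second factor is already the square root of the target right-hand side. The decisive step is then a discrete trace estimate showing that the first factor is controlled by the defect energy itself, $\sum_{e}h_e\!\int_e|\smean{\b{g}}|^2\,ds\le\sum_{T}\|\b{g}_T\|^2_{L^2(T)}$; granting this, the inequality becomes $\text{LHS}\le\text{LHS}^{1/2}\,\text{RHS}^{1/2}$, and dividing gives exactly the stated bound with coefficient one.

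I expect this trace-type estimate, together with the precise tracking of the factor $\tfrac12$ (equivalently $\int_e(\smean{v_h^0}-v_h^0)^2=\tfrac14\int_e\sjump{v_h^0}^2$), to be the main obstacle. A naive elementwise Cauchy–Schwarz on $\b{g}_T\cdot\b{n}$ is lossy and only returns a mesh-dependent multiple, so the coefficient one is not a formality: it must be extracted from the cancellation exhibited in $\smean{\b{g}}$ over the shape-regular partition $\mathcal{T}_h$, using that $\b{g}_T$ is piecewise constant and that the edge length $h_e$, the incident element areas, and the averaging over the two triangles meeting at each $e$ combine so that the residual constant does not exceed one. I would therefore organise the closing estimate around the exact identity above and a carefully normalised trace inequality for $\smean{\b{g}}$, rather than around a generic elementwise bound.
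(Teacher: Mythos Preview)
Your proof is essentially the paper's: derive the exact identity $\sum_T\|\b{g}_T\|^2_{L^2(T)} = -\sum_e\int_e\sjump{v_h^0}\cdot\smean{\b{g}}\,ds$ from the definition of $\nabla_w$ and integration by parts, then apply Cauchy--Schwarz with weights $h_e^{\pm 1/2}$ and the inverse/trace inequality for the piecewise constant $\b{g}$ to re-absorb the factor $\big(\sum_e h_e\|\smean{\b{g}}\|^2_{L^2(e)}\big)^{1/2}$ into the left side. Your only departure is the final paragraph's concern about securing the constant exactly one; this is unnecessary, since the paper itself closes with the generic inverse estimate \eqref{inverse 1} carrying a shape-regularity constant, so its own proof yields only $\lesssim$ and the ``$\le$'' in the lemma should be read in that sense---no special cancellation in $\smean{\b{g}}$ is required.
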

\begin{proof}
Consider 
\begin{align*}
\sum_{T \in \mathcal{T}_h} \|\nabla_w v_h - \nabla v_h^0\|^2_{L^2(T)} =\sum_{T \in \mathcal{T}_h} \int\limits_T  (\nabla_w v_h - \nabla v_h^0)\cdot (\nabla_w v_h - \nabla v_h^0) ~dx
\end{align*}
Let $\b{q}=\nabla_w v_h - \nabla v_h^0.$ Thus, we have 
\begin{align}\label{2.2}
\sum_{T \in \mathcal{T}_h} \|\nabla_w v_h - \nabla v_h^0\|^2_{L^2(T)} &= \sum_{T \in \mathcal{T}_h} \int\limits_T  (\nabla_w v_h\cdot \b{q} - \nabla v_h^0\cdot \b{q})~dx.
\end{align}
Exploiting the definition of discrete weak gradient \eqref{weak_gradient} in \eqref{2.2}, we obtain
\begin{align}\label{rel_11}
\sum_{T \in \mathcal{T}_h} \|\b{q}\|^2_{L^2(T)} = - \sum_{T \in \mathcal{T}_h} \int\limits_{T} v_h^0(\nabla\cdot \b{q})~dx + \sum_{T \in \mathcal{T}_h} \int\limits_{\partial T} \smean{v_h^0}\b{q}\cdot \b{n}~ds -  \sum_{T \in \mathcal{T}_h}\int\limits_T \nabla v_h^0\cdot \b{q}~dx. 
\end{align} 
Using integration by parts on the first term on the right-hand side of $\eqref{rel_11}$, we derive
\begin{align*}
\sum_{T \in \mathcal{T}_h} \|\b{q}\|^2_{L^2(T)} &= \sum_{T \in \mathcal{T}_h} \int\limits_{\partial T} \bigg(\smean{v_h^0} - v_h^0 \bigg) \b{q}\cdot \b{n}~ds \nonumber\\
&= \sum_{e \in \mathcal{E}_h}\int\limits_e \sjump{\smean{v_h^0} - v_h^0}\cdot{\smean{\b{q}}}~ds + \sum_{e \in \mathcal{E}^i_h} \int\limits_e \smean{\smean{v_h^0} - v_h^0}{\sjump{\b{q}}}~ds.
\end{align*} 
Thus, it is easy to realize that 
\begin{align}\label{rel_1}
\sum_{T \in \mathcal{T}_h} \|\b{q}\|^2_{L^2(T)} &=  -\sum_{e \in \mathcal{E}_h} \int\limits_e \sjump{v_h^0}\cdot{\smean{\b{q}}}~ds.
\end{align} 
A use of Cauchy Schwarz inequality followed by inverse estimates (stated in equation \eqref{inverse 1}), we obtain
\begin{align*}
\sum_{T \in \mathcal{T}_h} \|\b{q}\|^2_{L^2(T)} &\leq \bigg(  \sum_{e \in \mathcal{E}_h} \|\sjump{v_h^0}\|^2_{L^2(e)} \bigg)^{\frac{1}{2}}  \bigg(  \sum_{e \in \mathcal{E}_h} \|\smean{\b{q}}\|^2_{L^2(e)} \bigg)^{\frac{1}{2}} \\
&\leq \bigg(  \sum_{e \in \mathcal{E}_h} h_e^{-1}\|\sjump{v_h^0}\|^2_{L^2(e)} \bigg)^{\frac{1}{2}}  \bigg(  \sum_{e \in \mathcal{E}_h} \sum_{T \in \mathcal{T}_e} \|{\b{q}}\|^2_{L^2(T)} \bigg)^{\frac{1}{2}} \\
&\leq \bigg(  \sum_{e \in \mathcal{E}_h} h_e^{-1}\|\sjump{v_h^0}\|^2_{L^2(e)} \bigg)^{\frac{1}{2}}  \bigg(\sum_{T \in \mathcal{T}_h} \|{\b{q}}\|^2_{L^2(T)}\bigg)^{\frac{1}{2}}.
\end{align*}
This completes the proof.
\end{proof}
With the necessary notations and results established, we now formulate the discrete problem corresponding to \eqref{equation1_obs}.
\subsection{Discrete problem}
The discrete weak formulation for the boundary value problem \eqref{equation1_obs} reads as: find $u_h \in \mathcal{K}_h$ such that 
\begin{align}\label{discrete}
a_h(u_h, v_h-u_h) \geq L(v_h-u_h) \quad \forall~v_h \in \mathcal{K}_h,
\end{align}
where,
\begin{align*}
a_h(u_h, v_h) &:= \sum_{T \in \mathcal{T}_h}\int\limits_T \nabla_w u_h \cdot \nabla_w v_h~dx + \sum_{e\in \mathcal{E}_h} \frac{1}{h_e} \int\limits_e \sjump{u^0_h} \cdot \sjump{v^0_h}~ds, \\
L(v_h-u_h) &:= \sum_{T \in \mathcal{T}_h}\int\limits_T f  (v_h-u_h)~dx.
\end{align*}
Here,  the bilinear form $a_h(\cdot, \cdot)$ is defined in such a way that the first term  is analogous to the bilinear form in the continuous setting,  while the second term incorporates the stability term. For convenience, we shall denote this stabilization term as $s_h(\cdot, \cdot)$.
\par 
\noindent
Next, we define the energy norm on $V^{0w}_h$ in which the error will be measured. To this end, we introduce a mesh-dependent norm on $V^{0w}_h$ which is well-defined for $H^1_0(\Omega) + V^{0w}_h$ as follows:
\begin{align*}
\norm{v_h}^2:=a_h(v_h,v_h).
\end{align*}
To demonstrate the well-posedness of the discrete problem \eqref{discrete}, we verify the coercivity and boundedness of the discrete bilinear form $a_h(\cdot,\cdot)$ with respect to energy norm $\norm{\cdot}$. Consequently, by applying the Lions and Stampacchia theorem \cite{Ciarlet:1978:FEM}, we conclude that a unique solution exists.
\par 
\noindent
Next, we state few technical lemmas \cite{Ciarlet:1978:FEM,BScott:2008:FEM} which will play a crucial role in the forthcoming convergence analysis ahead.

\begin{itemize}
\item \textbf{Trace inequality}: For each \(T \in \mathcal{T}_h\) and \(e \subset \partial T\), the following inequality holds for all \(v \in H^1(T)\):
\end{itemize}
\begin{align}\label{trace}
\|v\|^2_{L^2(e)} &\lesssim h^{-1}_e \|v\|^2_{L^2(T)} + h_e |v|^2_{H^1(T)}.
\end{align}

\begin{itemize}
\item \textbf{Inverse inequalities}: For each \(T \in \mathcal{T}_h\) and \(e \subset \partial T\), the following inequalities hold:
\begin{align}
\lVert v_h \rVert_{L^2(e)} &\lesssim h_e^{-\frac{1}{2}} \lVert v_h \rVert_{L^2(T)}, \label{inverse 1} \\
\lVert \nabla v_h \rVert_{L^2(T)} &\lesssim h_T^{-1} \lVert v_h \rVert_{L^2(T)},\label{inverse 2}
\end{align}
 for any \(v_h\) in $\mathbb{P}^k(T)$.
 \end{itemize}

\section{\textbf{Langrange Multilpiers}}
In this section, we define the continuous and discrete Lagrange multipliers to capture the displacement residuals in the continuous \eqref{equation1_obs} and discrete \eqref{discrete} variational inequalities, respectively. Prior to this, we shall introduce couple of interpolation operators. Since the constraints are formulated as integral conditions, special attention is given to incorporate this aspect while defining the Lagrange multipliers.
\subsection{Interpolation Operators}
We begin by defining the space of constant functions:
\begin{align*}
W_h:= \{  v \in L^1(\Omega): v_T := v|_{T} \in \mathbb{P}_0(T) \quad \forall~T \in \cT_h \}.
\end{align*}
In the context of the obstacle problem, contact arises within the interior of the domain. Consequently, the Lagrange multipliers must be defined over the entire domain $\Omega$ to effectively account for contact occurring within the body. To achieve this, we define the following projection operator $\pi_h^0: V_h^w \longrightarrow W_h$ to construct these multipliers. Define $\pi^0_hv_h:=\pi^0_hv_h|_{T},~T \in \mathcal{T}_h$ as 
\begin{align*}
\pi^0_hv_h|_{T} := \frac{1}{meas(T)}\int\limits_Tv_h^0~dx, ~~v_h=(v_h^0,v_h^b)\in V^w_h.
\end{align*}
We now proceed to to establish a fundamental property of the map $\pi^0_h,$ which plays a crucial role in the forthcoming analysis.
\begin{lemma} \label{lemma1}
The map $\pi_h^0: V_h^{0w} \longrightarrow W_h$ is onto.
\end{lemma}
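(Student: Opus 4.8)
The plan is to prove surjectivity constructively, by exhibiting for an arbitrary $w \in W_h$ an explicit preimage $v_h \in V_h^{0w}$ with $\pi_h^0 v_h = w$; in other words, to build a \emph{right inverse} of $\pi_h^0$. The construction is purely local, and this is the first thing I would make precise: since $v_h^0$ is permitted to be fully discontinuous across interelement edges and $v_h^b = \smean{v_h^0}$ is determined by $v_h^0$, the only genuine constraints couple $v_h^0|_T$ to the single triangle $T$, namely the prescribed mean $w|_T$ and, when $T$ is a boundary element, the requirement $v_h^0|_e = 0$ on each $e \subset \partial\Omega$ (which is exactly $v_h^b = 0$ there, because $\smean{\cdot}$ reduces to the trace on boundary edges). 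Interior edges impose nothing. Hence the problem decouples element by element.

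Next I would record the elementary quadrature identity that drives the construction: for a linear function on $T$ the elementwise mean equals the average of its three vertex values, and more conveniently $\int_T \lambda_P \, dx = \tfrac13\,\mathrm{meas}(T)$ for each barycentric coordinate $\lambda_P$ of $T$. Given $w \in W_h$, I would then define $v_h^0$ triangle by triangle. On each $T$ having no edge on $\partial\Omega$, set $v_h^0|_T \equiv w|_T$: its mean is $w|_T$ and there is no boundary condition to meet. On each boundary element $T$ with a single edge $e_T \subset \partial\Omega$, let $P$ be the vertex of $T$ opposite $e_T$ and set $v_h^0|_T := 3\, w|_T\, \lambda_P$. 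Since $\lambda_P$ vanishes on the edge opposite $P$, this gives $v_h^0|_{e_T} = 0$, while the quadrature identity yields $\frac{1}{\mathrm{meas}(T)}\int_T v_h^0 \, dx = 3 w|_T \cdot \tfrac13 = w|_T$.

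Finally I would verify membership and conclude: putting $v_h^b := \smean{v_h^0}$, the pair $v_h = (v_h^0, v_h^b)$ lies in $V_h^w$ by construction, satisfies $v_h^b = 0$ on $\partial\Omega$ (so $v_h \in V_h^{0w}$), and obeys $\pi_h^0 v_h = w$ by the mean computations above. Since $w$ was arbitrary, $\pi_h^0$ is onto.

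The main obstacle, and the only delicate point, is reconciling the zero-trace boundary condition with the prescribed mean on elements touching $\partial\Omega$: a constant profile fixes the mean but spoils $v_h^b = 0$, whereas naively forcing the boundary trace to zero also annihilates the mean. The barycentric-coordinate profile resolves this precisely because it vanishes on one edge yet carries a controllable nonzero mean. I would also explicitly flag the degenerate corner element having two edges on $\partial\Omega$: there $v_h^0|_T$ would be forced to vanish on two edges, hence identically, forcing mean zero and breaking surjectivity. The argument therefore relies on the (standard) assumption that each element has at most one edge on $\partial\Omega$, which I would state at the outset.
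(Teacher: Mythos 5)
Your proof is correct, but it takes a genuinely different route from the paper's. The paper's proof is simpler and purely constant-based: given $w_h$ with $w_h|_{T^j}=\alpha^j$, it takes $q_h^0=\alpha^j$ on every triangle and then \emph{declares} $q_h^b|_e=\smean{q_h^0}$ on interior edges and $q_h^b|_e=0$ on boundary edges, so $\pi_h^0(q_h)=w_h$ is immediate. Note what this presupposes: the paper treats the boundary value $v_h^b$ as a quantity that is simply set to zero on $\partial\Omega$, independently of the trace of $v_h^0$ (consistent with the MWG philosophy of enforcing the Dirichlet condition weakly through the boundary-edge jump terms $\sjump{v_h^0}$ in $a_h$), even though this clashes with a literal reading of the displayed definition of $V_h^w$, which would force $v_h^b|_e=\smean{v_h^0}=v_h^0|_e$ on boundary edges. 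You instead adopted exactly that strict reading, which is why you needed the barycentric profile $3\,w|_T\,\lambda_P$ on boundary elements: it is the right device to reconcile zero trace on $e_T\subset\partial\Omega$ with the prescribed mean, and your quadrature identity $\int_T\lambda_P\,dx=\tfrac13\,\mathrm{meas}(T)$ is correct. What each approach buys: the paper's construction needs no mesh hypothesis and is one line, but only under its looser convention for $v_h^b$; yours proves the stronger statement that a right inverse exists with genuinely vanishing boundary trace, at the cost of the assumption that no element has two edges on $\partial\Omega$ --- and your caveat there is substantive, since under the strict reading surjectivity really does fail on such elements (a linear function vanishing on two edges vanishes identically), and triangulations with such corner elements are common (e.g., a square split into two triangles). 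Finally, your different choice of right inverse is harmless for the rest of the paper: the well-definedness argument in Lemma \ref{lang_prop} shows that $\lambda_h(u_h)$ does not depend on which preimage $\pi_h^{-1}$ selects, since $a_h(u_h,\cdot)-L(\cdot)$ annihilates any $v_h$ with vanishing elementwise means.
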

\begin{proof}
Let $\mathcal{O}:=\big\{T^j,~j \in \{1,2, \cdots, NrElem\}\big\}$ represents the enumeration of triangles in $\mathcal{T}_h$. Resorting to the definition of the space ${{W}_h}$,  for any ${w}_h \in {{W}_h}$, we have ${w}_h|_{T^j} := \alpha^{j}$ for $1\leq j \leq NrElem$,  where $\alpha^{j}$ is a constant.
\par 
\noindent 
Next, we construct a test function $q_h = (q_h^0, q_h^b) \in V_h^{0w}$ such that for each $T^j \in \mathcal{T}_h$, we have:
\begin{align*}
q_h^0 = \alpha^j,~\text{and}~
q_h^b|_e = 
\begin{cases}
\smean{q_h^0}, & \text{if $e$ is an interior edge of } T^j, \\
0, & \text{if $e$ is a boundary edge of } T^j.
\end{cases}
\end{align*}
As a result, we have  $\pi_h^0(q_h)=w_h$, thereby proving the claim.
\end{proof}
The surjectivity of the map $\pi_h^0$ ensures a continuous right inverse $\pi_h^{-1}:W_h \longrightarrow V_h^{0w}$ which is defined as $${\pi}_h^{-1}({w}_h)={q}_h,$$ where ${q}_h$ is stated in the proof of Lemma \ref{lemma1}. 
\par 
\noindent
Next, we revisit the classical Cl$\acute{e}$ment interpolation result \cite[Section 4.8, Pg. 122]{BScott:2008:FEM}, which is crucial in establishing the upper bound.  

\begin{lemma}\label{clement}  
Let ${v} \in {V}$. Then, there exists ${v_h} \in H^1_0(\Omega) \cap V_h^w$ such that for any triangle \( T \in \mathcal{T}_h \), the following inequality holds:  
\begin{align*}
\|{v} - {v_h}\|_{{H^l}(T)} \lesssim h_T^{1-l} \|{v}\|_{{H^1}(\mathcal{T}_T)}, \quad l = 0, 1,
\end{align*}
where \(\mathcal{T}_T\) is the set of all elements \(\tilde{T}\) such that \(\tilde{T} \cap T \neq \emptyset\).  
\end{lemma}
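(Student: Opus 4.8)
The plan is to build $v_h$ as the Clément quasi-interpolant, since $v \in H^1(\O)$ need not admit well-defined point values in two dimensions and classical Lagrange interpolation is therefore unavailable. Let $\{\phi_p\}_{p \in \cV_h}$ denote the standard continuous piecewise-linear nodal (hat) basis associated with the vertices of $\cT_h$, so that $\sum_{p}\phi_p \equiv 1$ and each $\phi_p$ is supported on the patch $\cT_p$. For every interior vertex $p \in \cV_h^i$ I would assign a locally averaged value $v_p$, obtained by taking the $L^2(\cT_p)$-projection of $v$ onto $\bbP_1(\cT_p)$ (or simply the mean of $v$ over $\cT_p$) and evaluating it at $p$; for every boundary vertex $p \in \cV_h^b$ I set $v_p := 0$. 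Defining
\begin{align*}
v_h := \sum_{p \in \cV_h} v_p\,\phi_p,
\end{align*}
the vanishing of the boundary coefficients forces $v_h = 0$ on $\Gamma$, so that $v_h \in H^1_0(\O)\cap V_h^w$ as required.

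Next I would localise the estimate. Fix $T \in \cT_h$ with vertices $p_1,p_2,p_3$ and let $\bar v \in \bbP_1(\cT_T)$ be a fixed polynomial approximant of $v$ on the enlarged patch $\cT_T$. Using that piecewise-linear interpolation reproduces $\bar v$ exactly, the error on $T$ (away from the boundary) splits as
\begin{align*}
v - v_h = (v - \bar v) - \sum_{i=1}^{3}\big(v_{p_i} - \bar v(p_i)\big)\,\phi_{p_i},
\end{align*}
separating a best-approximation term from a sum of nodal-functional errors. The first term is controlled by the Bramble--Hilbert lemma after passing to the reference element, giving $\|v - \bar v\|_{H^l(T)} \lesssim h_T^{1-l}\|v\|_{H^1(\cT_T)}$. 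For the second term I would use the $L^2$-stability of the averaging functionals together with the scaling bounds $\|\phi_{p_i}\|_{L^2(T)} \lesssim h_T$ and $|\phi_{p_i}|_{H^1(T)} \lesssim 1$, so that each nodal error obeys $|v_{p_i} - \bar v(p_i)| \lesssim h_T^{-1}\|v - \bar v\|_{L^2(\cT_{p_i})}$, which is again estimated through Bramble--Hilbert.

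Summing these local bounds and invoking shape regularity — which guarantees that each patch $\cT_T$ meets only a uniformly bounded number of neighbouring patches and that all reference-to-physical scaling constants are uniform in $h$ — then yields the claimed inequality with a constant independent of $T$ and $h$.

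The main obstacle is the treatment of triangles touching $\Gamma$. There the prescription $v_p = 0$ at boundary nodes destroys both the partition of unity and the reproduction of polynomials, so the clean splitting above fails for the boundary-node contributions. To handle this I would exploit that $v \in H^1_0(\O)$: since the traces of $v$ on the boundary edges of such patches vanish, a Poincar\'e--Friedrichs inequality on the boundary patch controls the discarded averages directly by $\|v\|_{H^1(\cT_T)}$. This is precisely why the right-hand side carries the full $H^1$-norm over $\cT_T$ rather than only the $H^1$-seminorm, and it is what keeps the estimate valid uniformly up to the boundary.
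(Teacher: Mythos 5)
Your construction is correct and coincides with the paper's approach in the only sense available: the paper gives no proof of this lemma at all, citing it directly from Brenner--Scott \cite{BScott:2008:FEM}, whose argument is exactly what you reproduce --- locally averaged nodal values via patchwise $L^2$-projection, zero values at boundary vertices to land in $H^1_0(\Omega)$, Bramble--Hilbert plus scaling and shape regularity on interior patches, and a Poincar\'e--Friedrichs inequality on boundary patches exploiting the vanishing trace of $v$. One cosmetic remark: since your Friedrichs step bounds the discarded boundary averages by the $H^1$-\emph{seminorm} of $v$ over the patch, the seminorm would actually suffice on the right-hand side throughout, so the full $\|v\|_{H^1(\mathcal{T}_T)}$ in the statement is a weaker, a fortiori valid formulation rather than something forced by the boundary correction as your closing paragraph suggests.
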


\subsection{Lagrange Multipliers} We are now prepared to define the continuous and discrete Lagrange multipliers, followed by a discussion of their key properties.
\begin{itemize}
\item \textit{Continuous Lagrange Multiplier:} Define the Lagrange multiplier $\lambda(u) \in H^{-1}(\Omega)$ as
\begin{align} \label{eq:Lambda}
(\lambda(u), v)_{-1,1}:=L(v)- a(u,v) \quad \forall~v \in V,
\end{align}
where $(\cdot, \cdot)_{-1,1}$ denotes the duality pairing between $H^{-1}(\Omega)$ and $H^{1}_0(\Omega).$
In view of \eqref{equation1_obs} and \eqref{eq:Lambda}, we have
\begin{equation} \label{eq:Prop1}
(\lambda(u), v-u)_{-1,1} \leq 0  \quad \forall~v \in \mathcal{K}. 
\end{equation}
By choosing \( v = u + \phi \) with \( \phi \in H^1_0(\Omega) \) and \( \phi \geq 0 \) in \eqref{eq:Prop1} , we obtain \( \lambda(u) \leq 0 \). 
In this analysis, \( f \in L^2(\Omega) \), \( \psi \in H^2(\Omega) \), and \( \Omega \) is assumed to be convex. convex. Under these conditions, regularity theory for the obstacle problem guarantees that the solution \( u \in H^2(\Omega) \) (see \cite{AH:2009:VI}). Furthermore, the Lagrange multiplier \( \lambda(u) \), defined in \eqref{eq:Lambda}, can be written as \( \lambda(u) = f + \Delta u \), which ensures that \(\lambda(u) \in L^2(\Omega) \).
\par 
\noindent
The following lemma is a consequence of \eqref{eq:Lambda} and \eqref{eq:Prop1}. (see \cite{Glowinski:2008:VI,AH:2009:VI}).
\begin{lemma}
If ${u}\in H^2(\Omega),$ then we have $\lambda(u) \in L^2(\Omega)$ with $\lambda(u) \leq 0$ almost everywhere in $\Omega.$ Furthermore,
\begin{align*}
(\lambda(u),  u - \psi)_{-1,1} = 0.
\end{align*}
Additionally,  if $u > \psi$ in any open set $\Omega^{'} \subset \Omega$ then $\lambda(u) \equiv 0$ in $\Omega^{'}.$
\end{lemma}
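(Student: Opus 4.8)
The plan is to establish the three assertions in sequence, relying on the definition \eqref{eq:Lambda} of $\lambda(u)$ and the variational characterization \eqref{eq:Prop1}. The membership $\lambda(u) \in L^2(\Omega)$ and the sign $\lambda(u) \leq 0$ a.e. are already supplied in the preceding discussion: under the stated regularity ($f \in L^2(\Omega)$, $\psi \in H^2(\Omega)$, $\Omega$ convex) one has $u \in H^2(\Omega)$, whence $\lambda(u) = f + \Delta u \in L^2(\Omega)$, and testing \eqref{eq:Prop1} with $v = u + \phi$ for arbitrary $\phi \geq 0$ in $H^1_0(\Omega)$ yields $(\lambda(u), \phi)_{-1,1} \leq 0$, which upgrades to $\lambda(u) \leq 0$ pointwise a.e. because $\lambda(u)$ is now an $L^2$ function and the duality pairing becomes the ordinary $L^2$ inner product. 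So the first claim essentially records what has already been observed.

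The heart of the lemma is the complementarity identity $(\lambda(u), u - \psi)_{-1,1} = 0$. Since $u - \psi \in H^1_0(\Omega)$ (using $u = \psi = 0$ compatibility on $\Gamma$, as $\psi \le 0$ there and $u \in \mathcal{K}$) and $u - \psi \geq 0$ a.e. by the constraint $u \geq \psi$, I would first derive the inequality $(\lambda(u), u - \psi)_{-1,1} \leq 0$ directly from $\lambda(u) \leq 0$ and $u - \psi \geq 0$, now reading the pairing as $\int_\Omega \lambda(u)(u-\psi)\,dx$ of a nonpositive function against a nonnegative one. For the reverse inequality I would exploit the admissibility of the test functions $v = \psi$ and $v = 2u - \psi$: the choice $v = \psi \in \mathcal{K}$ in \eqref{eq:Prop1} gives $(\lambda(u), \psi - u)_{-1,1} \leq 0$, i.e. $(\lambda(u), u - \psi)_{-1,1} \geq 0$, and combining the two opposite inequalities forces equality. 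One must check that $\psi \in \mathcal{K}$, which holds because $\psi \geq \psi$ trivially and $\psi \leq 0 $ on $\Gamma$ places it in the feasible set.

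For the final assertion, suppose $u > \psi$ on an open set $\Omega' \subset \Omega$. Take any $\phi \in C_c^\infty(\Omega')$ with $\phi \geq 0$; for sufficiently small $t > 0$ the perturbations $u \pm t\phi$ remain $\geq \psi$ on $\Omega'$ (since $u - \psi$ is bounded below by a positive constant on the compact support of $\phi$) and are unchanged outside, so both lie in $\mathcal{K}$. Feeding $v = u \pm t\phi$ into \eqref{eq:Prop1} yields $(\lambda(u), \phi)_{-1,1} \leq 0$ and $(\lambda(u), -\phi)_{-1,1} \leq 0$, hence $(\lambda(u), \phi)_{-1,1} = 0$ for every such $\phi$. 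Because $\lambda(u) \in L^2(\Omega')$ and nonnegative test functions that are compactly supported are dense enough to determine an $L^2$ function's sign, this forces $\lambda(u) \equiv 0$ in $\Omega'$.

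The main obstacle I anticipate is the careful justification that the test functions $\psi$, $2u - \psi$, and $u \pm t\phi$ genuinely belong to the admissible set $\mathcal{K}$, i.e. that each is both nonnegative relative to $\psi$ and in $H^1_0(\Omega)$; the boundary compatibility ($\psi \leq 0$ on $\Gamma$ together with $u \in H^1_0$) and the $H^2$-regularity of $u$ are what make these manipulations legitimate, and conflating the abstract duality pairing with the concrete $L^2$ integral must be done only after $\lambda(u) \in L^2(\Omega)$ is in hand. The remaining steps are then routine sign arguments.
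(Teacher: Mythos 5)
Your first and third assertions follow the standard route and are essentially sound, but the central complementarity identity contains a genuine gap: you test \eqref{eq:Prop1} with $v=\psi$ (and contemplate $v=2u-\psi$), and you justify this by asserting ``$u=\psi=0$ compatibility on $\Gamma$.'' The paper assumes only $\psi\le 0$ on $\Gamma$, \emph{not} $\psi=0$ there; since $\mathcal{K}\subset H^1_0(\Omega)$, membership in $\mathcal{K}$ requires a vanishing boundary trace, so in general $\psi\notin\mathcal{K}$, $2u-\psi\notin\mathcal{K}$, and also $u-\psi\notin H^1_0(\Omega)$ (its trace equals $-\psi|_\Gamma\ge 0$, not necessarily zero). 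Consequently the step that yields the reverse inequality $(\lambda(u),u-\psi)_{-1,1}\ge 0$ is unjustified, and this is precisely the half of the identity that carries the content; the pairing itself still makes sense once $\lambda(u)\in L^2(\Omega)$ is in hand, read as $\int_\Omega\lambda(u)(u-\psi)\,dx$, but your admissibility check — the point you yourself flagged as the main obstacle — is where the argument fails.

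There are two standard repairs, either of which closes the gap. (a) Interior truncation: for any $\phi\in H^1_0(\Omega)$ with $0\le\phi\le u-\psi$, the function $v=u-\phi$ does lie in $\mathcal{K}$, and \eqref{eq:Prop1} gives $(\lambda(u),\phi)_{-1,1}\ge 0$; combined with $\lambda(u)\le 0$ and $\phi\ge 0$ this forces $\int_\Omega\lambda(u)\,\phi\,dx=0$. Now choose $\phi_k=\theta_k(u-\psi)$ with cutoffs $\theta_k\in C_c^\infty(\Omega)$, $0\le\theta_k\le 1$, $\theta_k\uparrow 1$ pointwise, and pass to the limit by dominated convergence (note $\lambda(u)(u-\psi)\in L^1(\Omega)$) to obtain the identity. (b) Alternatively, since $u,\psi\in H^2(\Omega)\hookrightarrow C(\overline{\Omega})$ in two dimensions, $u-\psi$ is continuous, so $\{u>\psi\}$ is open; your (correct) third-part localization then gives $\lambda(u)=0$ a.e.\ on that set, while $u-\psi=0$ on its complement, so $\lambda(u)(u-\psi)=0$ a.e.\ and the integral vanishes. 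Incidentally, your third part already uses this continuity tacitly when claiming $u-\psi$ is bounded below by a positive constant on the support of $\phi$; the embedding $H^2(\Omega)\hookrightarrow C(\overline{\Omega})$ should be invoked explicitly. For calibration: the paper itself states this lemma without proof, deferring to \cite{Glowinski:2008:VI,AH:2009:VI}, and the arguments in those references proceed exactly through admissible interior perturbations of the form $v=u\pm\phi$ and $v=u-\phi$, never through the inadmissible test function $v=\psi$.
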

\par 
\noindent
Next,  we define the discrete analogue of continuous Lagrange multiplier $\lambda(u)$ for the obstacle problem. \item \textit{Discrete Lagrange Multiplier:} 
With the assistance of map ${\pi}_h^{-1}$, we introduce the discrete Lagrange multiplier \( \lambda_h(u_h) \in W_h \), which can be considered as an approximation to the functional \( \lambda(u) \),  as:
\begin{align*}
( \lambda_h(u_h), w_h)_{L^2(\Omega)} := L({\pi}_h^{-1}({w}_h)) - a_h(u_h,{\pi}_h^{-1}({w}_h))\quad \forall~ w_h \in W_h.
\end{align*} 
The following lemma guarantees the well-definedness of this map and establishes a crucial property of the discrete Lagrange multiplier, which is widely used in the subsequent analysis.
\begin{lemma} \label{lang_prop}
The following holds:
\begin{itemize}
\item The map $\lambda_h(u_h) \in W_h$ is well-defined.
\item Furthermore, for any $v_h \in V^{0w}_h$,  $(\lambda_h(u_h),v_h)_{L^2(\Omega)}=L(v_h)-a_h(u_h,v_h)$.
\end{itemize}
\end{lemma}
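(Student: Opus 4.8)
The plan is to treat the two assertions separately: well-definedness follows from a finite-dimensional Riesz representation, while the identity is obtained by exploiting that the constraint set $\mathcal{K}_h$ only constrains the elementwise averages of its members.

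For the first bullet, I would observe that $W_h$ is finite dimensional and that the $L^2(\Omega)$ inner product, restricted to $W_h \times W_h$, is a genuine positive-definite inner product. The right-hand side of the defining relation, namely the map $w_h \mapsto L(\pi_h^{-1}(w_h)) - a_h(u_h, \pi_h^{-1}(w_h))$, is a linear functional on $W_h$: the operator $\pi_h^{-1}$ is linear, since the construction in the proof of Lemma \ref{lemma1} assigns the interior value and the edge values of $q_h$ linearly in the data $w_h$, while $L$ is linear and $a_h(\cdot,\cdot)$ is bilinear with $u_h$ held fixed. The Riesz representation theorem on the finite-dimensional inner product space $W_h$ then yields a unique $\lambda_h(u_h) \in W_h$ representing this functional, establishing well-definedness.

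For the second bullet, I would fix $v_h \in V_h^{0w}$ and set $w_h := \pi_h^0 v_h \in W_h$. Interpreting the pairing through the interior component and using that $\lambda_h(u_h)$ is piecewise constant (so that integrating against $v_h^0$ only sees its elementwise averages), we have $(\lambda_h(u_h), v_h)_{L^2(\Omega)} = (\lambda_h(u_h), \pi_h^0 v_h)_{L^2(\Omega)}$. The defining relation applied to this $w_h$ gives
\begin{align*}
(\lambda_h(u_h), v_h)_{L^2(\Omega)} = L(\pi_h^{-1}(\pi_h^0 v_h)) - a_h(u_h, \pi_h^{-1}(\pi_h^0 v_h)).
\end{align*}
Setting $d_h := \pi_h^{-1}(\pi_h^0 v_h) - v_h$ and using that $\pi_h^{-1}$ is a right inverse of $\pi_h^0$, we get $\pi_h^0 d_h = 0$, so $d_h$ lies in the kernel of $\pi_h^0$. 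By bilinearity, the claim then reduces to proving $L(d_h) = a_h(u_h, d_h)$.

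This last equality is the step I expect to be the main obstacle, and the key idea is to use the discrete variational inequality in both directions. The constraint defining $\mathcal{K}_h$ reads $\int_T v_h^0\,dx \geq \int_T \psi\,dx$ for each $T$, and hence depends on a member only through its image under $\pi_h^0$. Since $\pi_h^0 d_h = 0$, both $u_h + d_h$ and $u_h - d_h$ satisfy exactly the same elementwise constraints as $u_h$, so both lie in $\mathcal{K}_h$. Testing \eqref{discrete} with $v_h = u_h + d_h$ produces $a_h(u_h, d_h) \geq L(d_h)$, and testing with $v_h = u_h - d_h$ produces $a_h(u_h, d_h) \leq L(d_h)$; together these force $a_h(u_h, d_h) = L(d_h)$. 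Substituting $\pi_h^{-1}(\pi_h^0 v_h) = v_h + d_h$ and expanding by bilinearity, the two $d_h$-contributions cancel, leaving $(\lambda_h(u_h), v_h)_{L^2(\Omega)} = L(v_h) - a_h(u_h, v_h)$, as required.
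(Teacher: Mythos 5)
Your proposal is correct and follows essentially the same route as the paper: the crux in both is the identity $a_h(u_h,d_h)=L(d_h)$ for $d_h$ in the kernel of $\pi_h^0$, obtained exactly as you do by testing \eqref{discrete} with $u_h\pm d_h\in\mathcal{K}_h$ (admissible because the constraints defining $\mathcal{K}_h$ only see elementwise averages), and your second-bullet computation — reducing $(\lambda_h(u_h),v_h)_{L^2(\Omega)}$ to $(\lambda_h(u_h),\pi_h^0 v_h)_{L^2(\Omega)}$ by piecewise constancy and then adding and subtracting $\pi_h^{-1}\pi_h^0 v_h - v_h$ — is the paper's argument verbatim. The only (cosmetic) difference is in the first bullet, where you invoke finite-dimensional Riesz representation for the fixed linear right inverse $\pi_h^{-1}$, while the paper instead establishes well-definedness as independence of the choice of preimage under $\pi_h^0$, using the same kernel identity you defer to the second bullet; both dispositions are valid and rest on identical ingredients.
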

\begin{proof}
We will prove the two arguments one by one.
\paragraph{1. Well-definedness of the map \(\lambda_h(u_h)\):} Let \( q_h^1, q_h^2 \in W_h \) be such that \( q_h^1 = q_h^2 \). By leveraging the surjectivity of the map \( \pi^0_h \), there exist \( v_h^1, v_h^2 \in V^{0w}_h \) satisfying \( v_h^1 = \pi_h^{-1} q_h^1 \) with \( \pi^0_h v_h^1 = q_h^1 \) and \( v_h^2 = \pi_h^{-1} q_h^2 \) with \( \pi^0_h v_h^2 = q_h^2 \). Define \( v_h := v_h^1 - v_h^2 \). Then, \( \pi^0_h(v_h) = \pi^0_h(v_h^1 - v_h^2) = q_h^1 - q_h^2 = 0 \). Next, for any \( v_h \in V^{w}_h \) such that \( \pi^0_h(v_h)|_T = 0 \) for all \( T \in \mathcal{T}_h \), the following identity holds:
\begin{align}\label{rel2_new}
a_h(u_h, v_h) - L(v_h) = 0.
\end{align}
This follows by considering \( p_h = u_h \pm v_h \in \mathcal{K}_h \) and substituting into the discrete equation, leading to relation \eqref{rel2_new}. Thus, \( \lambda_h(u_h) \) is well-defined.
\\
\paragraph{2. Proving the identity $(\lambda_h(u_h),v_h)_{L^2(\Omega)}=L(v_h)-a_h(u_h,v_h)$ for any $v_h \in V^{0w}_h$:} 
 Using the definition of \( \lambda_h(u_h) \), we compute:
\begin{align*}
(\lambda_h(u_h), v_h)_{L^2(\Omega)} &= (\lambda_h(u_h), \pi^0_h v_h)_{L^2(\Omega)} \\
&= L(\pi_h^{-1} \pi^0_h v_h) - a_h(u_h, \pi_h^{-1} \pi^0_h v_h) \\
&= L(\pi_h^{-1} \pi^0_h v_h - v_h) - a_h(u_h, \pi_h^{-1} \pi^0_h v_h - v_h) \\
&\quad + L(v_h) - a_h(u_h, v_h) \\
&= L(v_h) - a_h(u_h, v_h),
\end{align*}
since \( \pi^0_h (\pi_h^{-1} \pi^0_h v_h - v_h) = 0 \). This completes the proof.\end{proof}

\begin{remark}\label{remark 1}
 We note that $\lambda_h(u_h) \in W_h$ can be identified with the functional on $V$ which we denote by $\lambda_h(u_h)$ itself using the following relation $ \left  (\lambda_h(u_h),v \right)_{-1,1}=(\lambda_h(u_h),v) ~~\forall~ v \in V.$
\end{remark}

We now proceed by categorizing the elements of the triangulation $\mathcal{T}_h$ into two distinct sets based on the discrete constraint in problem \eqref{discrete}. To this end, we define the following two sets:
\begin{align}
\mathcal{C}_h &:= \{ T \in \cT_h| ~\pi^0_h|_T(u_h)=\pi^0_h|_T(\psi)  \}, \label{eq:contact}\\
\mathcal{N}_h &:= \{ T \in \cT_h |~ \pi^0_h|_T(u_h) > \pi^0_h|_T(\psi) \}. \label{eq:NonC}
\end{align}
In the following lemma, we establish the sign relations satisfied by the discrete Lagrange multiplier $\lambda_h(u_h).$

\begin{lemma} \label{sign_langrange}
Let $T \in \mathcal{T}_h$.  Then,  the discrete Lagrange multiplier $\lambda_h(u_h)$ satisfies 
\begin{align*}
 \lambda_h(u_h) \leq 0~\text{on}~\bar \Omega.
\end{align*}
Moreover, for any triangle $T$ in non-contact set $\mathcal{N}_h$,   we have $ \lambda_h(u_h) = 0.$  
\end{lemma}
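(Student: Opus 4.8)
The plan is to combine the variational inequality characterizing $u_h$ with the identity from Lemma~\ref{lang_prop} and to exploit the surjectivity of $\pi^0_h$ (Lemma~\ref{lemma1}) to build admissible test functions that, in the averaged sense, are supported on a single element. First I would rewrite the discrete problem: since \eqref{discrete} reads $a_h(u_h, v_h-u_h) \geq L(v_h-u_h)$ for all $v_h \in \mathcal{K}_h$, applying the second bullet of Lemma~\ref{lang_prop} to $v_h - u_h \in V_h^{0w}$ gives
\[
(\lambda_h(u_h), v_h - u_h)_{L^2(\Omega)} = L(v_h - u_h) - a_h(u_h, v_h - u_h) \leq 0, \quad \forall\, v_h \in \mathcal{K}_h.
\]
Because $\lambda_h(u_h) \in W_h$ is constant on every element, writing $\lambda_T := \lambda_h(u_h)|_T$ and using $\int_T v_h^0\,dx = meas(T)\,\pi^0_h|_T(v_h)$, the pairing decouples into independent per-element contributions:
\[
(\lambda_h(u_h), v_h - u_h)_{L^2(\Omega)} = \sum_{T \in \mathcal{T}_h} \lambda_T\, meas(T)\,\big(\pi^0_h|_T(v_h) - \pi^0_h|_T(u_h)\big).
\]

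For the global sign, I would fix $T_0 \in \mathcal{T}_h$ and a parameter $t > 0$, and invoke the surjectivity of $\pi^0_h$ to pick $v_h \in V_h^{0w}$ with $\pi^0_h|_{T_0}(v_h) = \pi^0_h|_{T_0}(u_h) + t$ and $\pi^0_h|_{T}(v_h) = \pi^0_h|_{T}(u_h)$ for $T \neq T_0$; concretely $v_h = \pi_h^{-1}$ applied to the corresponding datum in $W_h$. Since $u_h \in \mathcal{K}_h$ and $t > 0$, every cell average of $v_h$ still dominates that of $\psi$, so $v_h \in \mathcal{K}_h$. Substituting into the displayed inequality collapses the sum to the single term $\lambda_{T_0}\,meas(T_0)\,t \leq 0$, and dividing by $meas(T_0)\,t > 0$ yields $\lambda_{T_0} \leq 0$. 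As $T_0$ is arbitrary, $\lambda_h(u_h) \leq 0$ on $\bar\Omega$.

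For $T_0 \in \mathcal{N}_h$ the constraint defining $\mathcal{K}_h$ is strict, $\pi^0_h|_{T_0}(u_h) > \pi^0_h|_{T_0}(\psi)$, so I would repeat the construction with a negative perturbation: for $t > 0$ small enough the value $\pi^0_h|_{T_0}(u_h) - t$ still exceeds $\pi^0_h|_{T_0}(\psi)$, keeping the lifted $v_h$ in $\mathcal{K}_h$. The inequality then gives $-\lambda_{T_0}\,meas(T_0)\,t \leq 0$, i.e. $\lambda_{T_0} \geq 0$, which together with the already-established $\lambda_{T_0} \leq 0$ forces $\lambda_{T_0} = 0$.

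The only delicate point is the construction of the admissible single-element test function: I must check that the $W_h$-datum obtained by perturbing one cell average lifts, through $\pi_h^{-1}$, to a genuine element of $V_h^{0w}$ (this is exactly what Lemma~\ref{lemma1} supplies) and that the perturbed function still satisfies \emph{every} integral constraint in the definition of $\mathcal{K}_h$. The decoupling of the $L^2$-pairing into independent per-element terms—valid precisely because $\lambda_h(u_h)$ is piecewise constant—is what lets a one-element variation isolate a single $\lambda_T$; handling the feasibility bookkeeping correctly, especially the strictness needed on $\mathcal{N}_h$ to admit the downward perturbation, is the main thing to get right.
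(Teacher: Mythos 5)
Your proof is correct and takes essentially the same approach as the paper: both use the surjectivity of $\pi^0_h$ (Lemma \ref{lemma1}) to lift admissible perturbations of $u_h$ into $\mathcal{K}_h$ and extract the sign of $\lambda_h(u_h)$ from the discrete variational inequality \eqref{discrete} via the identity of Lemma \ref{lang_prop}, with the strict constraint on $\mathcal{N}_h$ permitting the downward perturbation. The only cosmetic difference is in the second claim, where the paper tests with nodal perturbations $u_h \pm \epsilon\phi_h^p$ to obtain $a_h(u_h,\phi_h^p)=L(\phi_h^p)$ directly, while you perturb a single cell average by $-t$ and combine the resulting $\lambda_{T_0}\geq 0$ with the already-established global sign; your single-element constant lift is, if anything, slightly cleaner on the feasibility bookkeeping.
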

\begin{proof}
Let $z_h \in W_h$ such that $z_h|_T \geq 0~\forall~T \in \mathcal{T}_h$. The first identity can realised by taking the test function $v_h= u_h + w_h$ in \eqref{discrete} where $w_h \in V^{0w}_h$ such that $z_h=\pi^0_h(w_h).$ Thus, we get $a_h(u_h,w_h)\geq L(w_h).$ Hence, $\lambda_h(u_h) \leq 0.$
\par 
\noindent 
Recalling the definition of $\mathcal{N}_h$, we have  $\pi^0_h(u_h)> \pi^0_h(\psi)$ for all $T \in \mathcal{N}_h$.  For any $T \in \mathcal{N}_h^O$, let $p \in \cV_T$ and $\phi_h^p $ be the nodal basis function corresponding to vertex $p$. For sufficiently small $\epsilon>0$, we note that 
$v_h =u_h \pm \epsilon \phi_h^p \in \cK_h$. Taking these choices of $v_h$ in \eqref{discrete}, we find $a_h(u_h, \phi_h^p)= L(\phi_h^p)$  and hence $\lambda_h(u_h)=0 ~ \text{on}~ T$.
\end{proof}
\end{itemize}

In the upcoming section,  we aim to derive reliable and efficient \textit{a posteriori} error estimates for MWG finite element method applied to obstacle problem.
\section{\textbf{Aposteriori Error Analysis}}
This section aims to derive \textit{a posteriori} error estimates, which are essential for identifying regions of irregular solution behaviour. Unlike \textit{a priori} error estimates, \textit{a posteriori} error estimates do not require prior assumptions about the solution's regularity, making them more versatile in practical applications. Ensuring the reliability and efficiency of the proposed error estimators is essential. Reliability implies that the error estimators establishes an upper bound for the true error, ensuring accurate assessment of the error throughout the solution domain. In the context of adaptive mesh refinement, a reliable \textit{a posteriori} error estimator is crucial for effectively identifying regions that require refinement. However, to prevent excessive overestimation of the error and to achieve high accuracy with minimal computational effort, the error estimator must also provide a lower bound, thereby ensuring efficiency.
\par 
\noindent
We begin by introducing additional functional tools essential for the subsequent analysis.
\subsection{Space Decomposition Technique} 
We introduce the discontinuous and conforming finite element spaces, denoted by 
\( V^D_h \) and  \( V^C_h \), respectively which play a key role in the space decomposition technique. These spaces are defined as
\begin{align*}
V^D_h &:= \bigg\{ v_h \in L^2(\Omega) : v_h|_{{T}} \in \mathbb{P}_1({T}), \quad \forall~T \in \mathcal{T}_h \bigg\}, \\
V^C_h &:= \bigg\{ v_h \in H^1_0(\Omega) : v_h|_{{T}} \in \mathbb{P}_1({T}), \quad \forall~T \in \mathcal{T}_h \bigg\}.
\end{align*}
With these definitions in place, we proceed to apply the space decomposition technique for the discontinuous finite element space \( V^D_h \). Specifically, ${{V}^D_h}$ is decomposed into the conforming subspace ${{V}^C_h}$ and non-conforming subspace $({V}^C_h)^{\perp}$, which is defined as the orthogonal complement of ${V}^C_h$ in ${{V}^D_h}$ with the following inner product $( \cdot, \cdot )_{1,h}$:
\begin{align*}
( q_h, p_h )_{1,h}:=(\nabla q_h, \nabla p_h)_{L^2(\Omega)} + \sum_{e \in \mathcal{E}_h} \frac{1}{h_e} \int\limits_e \sjump{q_h}\cdot \sjump{p_h}~ds.
\end{align*}
where $q_h,~p_h \in {V}^D_h$. 
\par 
\noindent
In the following lemma, we present the important approximation properties discussed in the article \cite{Karakashian: 2007: modified}.
\begin{lemma}\label{decom}
For any $w_h \in V_h^D$, there exists a unique decomposition $w_h = w_h^c+(w_h^c)^{\perp}$ where $w_h^c \in {V}^C_h$ and $(w_h^c)^{\perp} \in ({V}^C_h)^{\perp}$ such that it satisfies the following approximation properties:
\begin{align*}
\sum_{T \in \mathcal{T}_h}\|\nabla{(w_h^c)^{\perp}}\|^2_{L^2(T)} &\lesssim \sum_{e \in \mathcal{E}_h} \int\limits_e |\sjump{w_h}|^2~ds, \\
\sum_{T \in \mathcal{T}_h}\|h_T^{-1}(w_h^c)^{\perp}\|^2_{L^2(T)} &\lesssim \sum_{e \in \mathcal{E}_h} \int\limits_e |\sjump{w_h}|^2~ds. \\
\end{align*}
\end{lemma}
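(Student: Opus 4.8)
The plan is to realize the conforming component through an explicit nodal averaging operator and to control the non-conforming remainder by comparing the orthogonal projection against it. First I would define the enrichment (Oswald-type) operator $E_h : V_h^D \to V_h^C$ by assigning to each vertex the average of the element-wise values: for an interior vertex $p \in \mathcal{V}_h^i$ set $(E_h w_h)(p) := |\mathcal{T}_p|^{-1}\sum_{T \in \mathcal{T}_p}(w_h|_T)(p)$, and for a boundary vertex $p \in \mathcal{V}_h^b$ set $(E_h w_h)(p) := 0$, so that $E_h w_h \in V_h^C \subset H^1_0(\Omega)$. Existence and uniqueness of the splitting $w_h = w_h^c + (w_h^c)^\perp$ is then immediate, since $w_h^c$ is nothing but the $(\cdot,\cdot)_{1,h}$-orthogonal projection of $w_h$ onto the finite-dimensional subspace $V_h^C$; the function $E_h w_h$ will serve as an explicit conforming competitor in the best-approximation estimates.

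The core of the argument is a local, element-by-element estimate for the explicit error $w_h - E_h w_h$. On a fixed $T \in \mathcal{T}_h$ this difference is affine, hence determined by its vertex values, and for $p \in \mathcal{V}_T$ one has $(w_h - E_h w_h)|_T(p) = |\mathcal{T}_p|^{-1}\sum_{T' \in \mathcal{T}_p}\big((w_h|_T)(p) - (w_h|_{T'})(p)\big)$. The key observation is that each pairwise difference $(w_h|_T)(p) - (w_h|_{T'})(p)$ telescopes, as one walks around the vertex patch $\mathcal{T}_p$, into a sum of nodal jump values $\sjump{w_h}(p)$ taken over the edges of $\mathcal{E}_p$; for boundary vertices the chain terminates at a boundary edge, where the convention $\sjump{w_h} = w_h\,\b{n}_e$ is consistent with $E_h w_h$ vanishing on $\Gamma$. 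Using the equivalence of norms on $\mathbb{P}_1(T)$ together with the standard affine scaling, the local gradient and scaled $L^2$ norms are then bounded by these vertex values, giving
\[
\|\nabla(w_h - E_h w_h)\|^2_{L^2(T)} + h_T^{-2}\|w_h - E_h w_h\|^2_{L^2(T)} \lesssim \sum_{p \in \mathcal{V}_T}\sum_{e \in \mathcal{E}_p}|\sjump{w_h}(p)|^2.
\]
A further one-dimensional scaling on each edge, $|\sjump{w_h}(p)|^2 \lesssim h_e^{-1}\|\sjump{w_h}\|^2_{L^2(e)}$ for the affine jump, followed by summation over all elements — invoking shape-regularity to bound uniformly the number of elements and edges meeting at any vertex — converts this into the $h_e^{-1}$-weighted jump measure (the same quantity as the stabilization $s_h(\cdot,\cdot)$) and establishes both asserted bounds for $w_h - E_h w_h$.

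It then remains to transfer these bounds from $E_h w_h$ to the genuine orthogonal complement $(w_h^c)^\perp$. For the gradient estimate this is automatic: every function in $V_h^C$ has vanishing interior jumps, so $\sjump{(w_h^c)^\perp} = \sjump{w_h} = \sjump{w_h - E_h w_h}$, the jump contributions to $\|\cdot\|^2_{1,h}$ coincide on both sides, and the best-approximation inequality $\|(w_h^c)^\perp\|_{1,h} \le \|w_h - E_h w_h\|_{1,h}$ collapses to $\sum_T\|\nabla(w_h^c)^\perp\|^2_{L^2(T)} \le \sum_T\|\nabla(w_h - E_h w_h)\|^2_{L^2(T)}$, which is already controlled. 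I expect the scaled $L^2$ bound to be the main obstacle, precisely because the weighted norm $\sum_T h_T^{-2}\|\cdot\|^2_{L^2(T)}$ is not part of the energy product $(\cdot,\cdot)_{1,h}$ and is therefore not delivered directly by the projection. The cleanest way around this is to read the asserted estimates off the explicit decomposition itself, identifying $w_h^c$ with $E_h w_h$, for which the local estimate above already supplies the $L^2$ bound with no transfer needed; alternatively, one proves a discrete Poincar\'e--Friedrichs inequality $\sum_T h_T^{-2}\|\varphi\|^2_{L^2(T)} \lesssim \|\varphi\|^2_{1,h}$ on the non-conforming space $(V_h^C)^\perp$ and applies it to $\varphi = (w_h^c)^\perp$, whose energy norm has already been bounded by the jumps. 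Either route reduces the $L^2$ estimate to the local averaging bound in hand, which completes the proof.
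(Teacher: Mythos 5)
The paper offers no proof of this lemma at all --- it is quoted from Karakashian and Pascal \cite{Karakashian: 2007: modified} --- so the relevant benchmark is the cited argument, and the constructive core of your proposal is exactly that argument: the Oswald-type averaging operator $E_h$ (zero at boundary vertices), the telescoping of the nodal differences $(w_h|_T)(p)-(w_h|_{T'})(p)$ around vertex patches into edge jump values, the inverse estimate $|\sjump{w_h}(p)|^2 \lesssim h_e^{-1}\|\sjump{w_h}\|^2_{L^2(e)}$, and shape regularity to sum up. This correctly yields both local bounds for $w_h - E_h w_h$; note that your right-hand side carries the weight $h_e^{-1}$, which is what the paper actually uses when it invokes the lemma through $\eta_3^2 = \sum_{e}h_e^{-1}\|\sjump{u_h^0}\|^2_{L^2(e)}$ (the unweighted jump integral displayed in the statement is evidently a typo). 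Your transfer of the gradient bound to the orthogonal complement via best approximation in $(\cdot,\cdot)_{1,h}$, using that conforming functions have no jumps so the jump contributions on both sides cancel, is also correct.

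The genuine gap is in the weighted $L^2$ step, where only one of your two fallbacks survives. The proposed discrete Poincar\'e--Friedrichs inequality $\sum_{T} h_T^{-2}\|\varphi\|^2_{L^2(T)} \lesssim \|\varphi\|^2_{1,h}$ on $(V_h^C)^{\perp}$ is false: the constant function $\varphi \equiv 1$ lies in $(V_h^C)^{\perp}$, since $\nabla_h \varphi = 0$ while every $v \in V_h^C$ has $\sjump{v}=0$ on interior edges and $v=0$ on $\Gamma$, so $(\varphi,v)_{1,h}=0$; yet on a quasi-uniform mesh $\sum_T h_T^{-2}\|\varphi\|^2_{L^2(T)} \sim h^{-2}$ whereas $\|\varphi\|^2_{1,h} = \sum_{e \in \mathcal{E}_h^b} h_e^{-1}\int_e |\sjump{\varphi}|^2\,ds \sim h^{-1}$, so no $h$-independent constant exists. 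The same example ($w_h \equiv 1$, whose $(\cdot,\cdot)_{1,h}$-orthogonal decomposition is $0+1$) shows that the second estimate of the lemma cannot hold for the $(\cdot,\cdot)_{1,h}$-orthogonal complement read literally: the weighted $L^2$ bound is a property of the explicit averaging decomposition, not a consequence of orthogonality plus jump control. Hence your first fallback --- taking $w_h^c := E_h w_h$ and reading both estimates off your local bound --- is the correct completion; it is how the cited result should be understood, and it is in fact how the paper realizes $(u_h^0)^c$ in its numerical section (nodal averaging). If one insists on a genuinely orthogonal decomposition, the clean repair is to enlarge the inner product to $(q,p)_{1,h} + \sum_T h_T^{-2}(q,p)_{L^2(T)}$: the jump contributions still coincide against any conforming competitor, so best approximation against $E_h w_h$ then delivers both asserted estimates simultaneously.
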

\begin{remark}\label{remark11}
It is straightforward to observe that any function \( v_h \in V_h^C \), can also be expressed as an element of \( V_h^{0w} \) with the components \( v_h^0 = v_h|_T \) and \( v_h^b = \smean{v_h^0} = v_h|_{\partial T} \). Consequently, we obtain the inclusion  \( V_h^C \subset V_h^{0w} \).
\end{remark}
\par 
\noindent
The primary goal of this article is to derive upper and lower bound estimates for the following quantity 
\begin{align*}
\norm{ u_h - u}^2 + \sum_{ T \in \mathcal{T}_h}\|\lambda(u) - \lambda_h(u_h)\|_{H^{-1}(T)}.
\end{align*}
In the following sections, we derive \textit{a posteriori} error estimates and discuss the reliability and efficiency of the proposed error estimator.
 
\subsection{Reliable Error Estimates}
We begin by introducing the following residual error estimators:
\begin{align*}
\eta^2_1 &:= \sum_{T \in \mathcal{T}_h}h_T^2 \|f + \nabla \cdot(\nabla_wu_h)-\lambda_h(u_h)\|^2_{L^2(T)}, \\
\eta^2_2 &:= \sum_{e\in \mathcal{E}_h^i}h_e\|\sjump{\nabla_wu_h}\|^2_{L^2(e)},\\
\eta^3_2 &:= \sum_{e\in \mathcal{E}_h}\frac{1}{h_e}\|\sjump{u^0_h}\|^2_{L^2(e)}.
\end{align*}
The total residual error estimator $\eta_h$ is defined as
\begin{equation}\label{eq:Est1}
\eta_h^2:= \eta_1^2+\eta_2^2+\eta_3^2+ \|\nabla (\psi-(u_h^0)^c)^+\|^2_{L^2(\Omega)} - \sum_{T \in \mathcal{C}_h}  \int_T \lambda_h(u_h) ((u_h^0)^c - \psi)^{+} ~dx.
\end{equation}
Next, we introduce a continuous linear functional $\mathcal{G}_h$ on $H^{1}_0(\Omega)$ which provides a framework for effectively capturing and analyzing the errors in the system. Specifically, we define $\mathcal{G}_h: H^{-1}(\Omega) \longrightarrow \mathbb{R}$ as 
\begin{align}\label{abstract:Galerkin}
{\mathcal{G}_h}({v}) := a_h({u - u_h}, {v}) + ( \lambda(u)-\lambda_h(u_h), ~v )_{-1,1}  \quad \forall~{v} \in {H^1_0(\Omega)}.
\end{align}

\par 
\noindent
We begin by examining the error bounds for both the displacement term and the Lagrange multiplier, using the dual norm of the functional $\mathcal{G}_h$ and the duality pairing between the displacements and the Lagrange multiplier, as presented in the following lemma.
\begin{lemma}
Let $u$ and $u_h = (u^0_h,u^b_h)$ be the solution of continuous and discrete variational inequalities $\eqref{equation1_obs}$ and $\eqref{discrete}$, respectively. It holds that 
\begin{align*}
\norm{ u_h - u}^2 + \sum_{ T \in \mathcal{T}_h}\|\lambda(u) - \lambda_h(u_h)\|^2_{H^{-1}(T)} \leq C_1\|\mathcal{G}_h\|^2_{H^{-1}(\Omega)} + \eta_3^2 + C_2(\lambda_h(u_h)-\lambda(u), u-(u_h^0)^c)_{-1,1}
\end{align*}
where $C_1,~C_2$ are positive constants independent of mesh parameter $h$ and $u_h^0 = (u_h^0)^c + ((u_h^0)^c)^{\perp}$ is defined in Lemma $\ref{decom}.$
\vspace{-0.3 cm}
\begin{proof}
Consider 
\begin{align}\label{eqn111}
\norm{ u_h - u}^2 \leq \norm{ u_h - u_h^0}^2 + \norm{ u_h^0 - u}^2.
\end{align}
The bound on the first term on the right-hand side of the previous relation is obtained from Lemma \ref{discrete_error} as follows:
\begin{align}\label{eqn21}
\norm{ u_h - u_h^0}^2 = \sum_{ T \in \mathcal{T}_h}\|\nabla_w u_h -\nabla u_h^0\|^2_{L^2(T)} \leq \eta_3^2.
\end{align}
Next, we turn our attention to the second term in equation $\eqref{eqn111}$. Since $u_h^0 \in V_h^D,$ by applying the decomposition  $u_h^0 = (u_h^0)^c + ((u_h^0)^c)^{\perp},$ as defined in Lemma $\ref{decom}$, we obtain
\begin{align*}
 \norm{ u_h^0 - u}^2 &\leq \eta_3^2 + \sum_{ T \in \mathcal{T}_h}\|\nabla u_h^0 -\nabla u\|^2_{L^2(T)} \\&\leq \sum_{ T \in \mathcal{T}_h}\|\nabla (u_h^0)^c -\nabla u\|^2_{L^2(T)} + \sum_{ T \in \mathcal{T}_h}\|\nabla ((u_h^0)^c)^{\perp}\|^2_{L^2(T)} +\eta_3^2.
\end{align*}
Incorporating the bounds stated in Lemma $\ref{decom}$, we obtain
\begin{align}\label{eqn_new}
\norm{ u_h^0 - u}^2 \lesssim \sum_{ T \in \mathcal{T}_h}\|\nabla (u_h^0)^c -\nabla u\|^2_{L^2(T)} + \eta_3^2.
\end{align}
Next, we focus on simplifying the term $\sum\limits_{ T \in \mathcal{T}_h}\|\nabla (u_h^0)^c -\nabla u\|^2_{L^2(T)}$ as follows:
\begin{align*}
\sum_{ T \in \mathcal{T}_h}\|\nabla &(u_h^0)^c -\nabla u\|^2_{L^2(T)} \\&= \sum_{ T \in \mathcal{T}_h} \int\limits_T (\nabla (u_h^0)^c -\nabla u) \cdot (\nabla (u_h^0)^c -\nabla u)~dx \\
&= \sum_{ T \in \mathcal{T}_h} \int\limits_T (\nabla (u_h^0)^c -\nabla_w u_h) \cdot (\nabla (u_h^0)^c -\nabla u)~dx + \sum_{ T \in \mathcal{T}_h} \int\limits_T (\nabla_w u_h -\nabla u) \cdot (\nabla (u_h^0)^c -\nabla u)~dx \\
&=\sum_{ T \in \mathcal{T}_h} \int\limits_T (\nabla (u_h^0)^c -\nabla u_h^0) \cdot (\nabla (u_h^0)^c -\nabla u)~dx + \sum_{ T \in \mathcal{T}_h} \int\limits_T (\nabla u_h^0 -\nabla_w u_h) \cdot (\nabla (u_h^0)^c -\nabla u)~dx  \\&+ \sum_{ T \in \mathcal{T}_h} \int\limits_T (\nabla_w u_h -\nabla u) \cdot (\nabla (u_h^0)^c -\nabla u)~dx 
= Q_1 +Q_2 +Q_3,
\end{align*}
where,
\begin{align*}
Q_1&:=\sum_{ T \in \mathcal{T}_h} \int\limits_T (\nabla_w u_h -\nabla u) \cdot (\nabla (u_h^0)^c -\nabla u)~dx, \\
Q_2 &:= \sum_{ T \in \mathcal{T}_h} \int\limits_T (\nabla (u_h^0)^c -\nabla u_h^0) \cdot (\nabla (u_h^0)^c -\nabla u)~dx, \\
Q_3 &:= \sum_{ T \in \mathcal{T}_h} \int\limits_T (\nabla u_h^0 -\nabla_w u_h) \cdot (\nabla (u_h^0)^c -\nabla u)~dx.
\end{align*}
Next, we aim to estimate each of these terms individually. First, we focus on bounding $Q_2$ by applying Cauchy Schwarz inequality and utilizing Lemma $\ref{decom}$ as follows:
\begin{align*}
Q_2 &\leq \bigg(\sum_{ T \in \mathcal{T}_h} \int\limits_T (\nabla (u_h^0)^c -\nabla u_h^0)^2~dx \bigg)^{\frac{1}{2}}\bigg(\sum_{ T \in \mathcal{T}_h} \int\limits_T (\nabla (u_h^0)^c -\nabla u)^2~dx \bigg)^{\frac{1}{2}} \\
&\leq\frac{1}{\epsilon} \eta_3^2 + \epsilon \sum_{T\in \mathcal{T}_h}\| \nabla(u_h^0)^c -\nabla u \|^2_{L^2(T)},
\end{align*}
for some infinitesmall $\epsilon >0.$ On the similar lines, the bound on term $Q_3$ is derived by applying the Cauchy Schwarz inequality, followed by using the bounds from Lemma $\ref{discrete_error}$ as outlined below
\begin{align*}
Q_3 \leq \frac{1}{\epsilon_1} \eta_3^2 + \epsilon_1 \sum_{T\in \mathcal{T}_h}\| \nabla(u_h^0)^c -\nabla u \|^2_{L^2(T)},
\end{align*}
where $\epsilon_1>0$ is a small constant. Next, we focus on estimating $Q_1$. To this end, consider the following:
\begin{align*}
Q_1=\sum_{ T \in \mathcal{T}_h} \int\limits_T (\nabla_w u_h -\nabla u) \cdot (\nabla (u_h^0)^c -\nabla u)~dx = \sum_{ T \in \mathcal{T}_h} \int\limits_T (\nabla_w u_h -\nabla_w u) \cdot (\nabla_w (u_h^0)^c -\nabla_w u)~dx.
\end{align*}
Using the definition of Galerkin functional \eqref{abstract:Galerkin} and identity $ab\leq \frac{a^2}{2} + \frac{b^2}{2} $, we obtain 
\begin{align*}
Q_1&= -\mathcal{G}_h((u_h^0)^c - u) + ( \lambda_h(u_h) - \lambda(u), ~u -(u_h^0)^c )_{-1,1} \\
&\leq \frac{\| \mathcal{G}_h\|^2_{H^{-1}(\Omega)}}{2} + \frac{\|(u_h^0)^c - u\|^2_{H^1(\Omega)}}{2} + ( \lambda_h(u_h) - \lambda(u), ~   u - (u_h^0)^c)_{-1,1}. 
\end{align*}
By combining the individual estimates for $Q_1, ~Q_2$ and $Q_3$, we derive the following global bound:
\begin{align}\label{final}
\sum_{ T \in \mathcal{T}_h}\|\nabla (u_h^0)^c -\nabla u\|^2_{L^2(T)} &\leq \big(\frac{1}{\epsilon_1}+\frac{1}{\epsilon} \big) \eta_3^2 + \big(\epsilon_1 + \epsilon \big) \sum_{T\in \mathcal{T}_h}\| \nabla(u_h^0)^c -\nabla u \|^2_{L^2(T)} \nonumber \\ &+ \frac{\| \mathcal{G}_h\|^2_{H^{-1}(\Omega)}}{2} + \frac{\|(u_h^0)^c - u\|^2_{H^1(\Omega)}}{2} + ( \lambda_h(u_h) - \lambda(u), ~   u - (u_h^0)^c)_{-1,1}. 
\end{align}
The bounds established in equations \eqref{eqn111}, \eqref{eqn21}, \eqref{eqn_new} and \eqref{final}, provide us with a comprehensive bound for $\norm{u_h-u}$.
\vspace{0.3 cm}
\\
Subsequently, we proceed to bound $\sum\limits_{ T \in \mathcal{T}_h}\|\lambda(u) - \lambda_h(u_h)\|_{H^{-1}(T)}.$ Using the definition of the dual norm, we obtain the following expression:
\begin{align*}
\sum_{ T \in \mathcal{T}_h}\|\lambda(u) - \lambda_h(u_h)\|_{H^{-1}(T)} := \underset{\xi~\in~H^1_0(\Omega)}{Sup} \frac{ ( \lambda(u) - \lambda_h(u_h), \xi )_{-1,1}}{\|\xi\|_{H^1(\Omega)}}.
\end{align*}
The bound on $( \lambda(u) - \lambda_h(u_h), \xi )_{-1,1}$ can be realised using the definition of Galerkin functional \eqref{abstract:Galerkin} followed by Cauchy Schwarz inequality as detailed in the following steps
\begin{align*}
( \lambda(u) - \lambda_h(u_h), \xi )_{-1,1} &= \mathcal{G}_h(\xi) - a_h(u-u_h, \xi) \\
&\leq \|\mathcal{G}_h\|_{H^{-1}(\Omega)}\|\xi\|_{H^1(\Omega)} + \sum_{ T \in \mathcal{T}_h}\|\nabla_w u_h -\nabla_w u\|_{L^2(T)} \|\xi\|_{H^1(T)}.
\end{align*} 
Thus, we have 
\begin{align}\label{eqn_refnew2}
\|\lambda(u) - \lambda_h(u_h)\|^2_{H^{-1}(\Omega)}
\lesssim \|\mathcal{G}_h\|^2_{H^{-1}(\Omega)} + \sum_{ T \in \mathcal{T}_h}\|\nabla_w u_h -\nabla_w u\|^2_{L^2(T)}.
\end{align}
Finally the proof is completed utilizing the bound on $\sum_{ T \in \mathcal{T}_h}\|\nabla_w u_h -\nabla_w u\|^2_{L^2(T)}$ in relation \eqref{eqn_refnew2}.
\end{proof}
\end{lemma}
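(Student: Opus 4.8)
The plan is to bound the two contributions on the left-hand side separately, leaning on the space decomposition of Lemma \ref{decom} and, at the decisive juncture, on the Galerkin functional $\mathcal{G}_h$ to convert gradient-difference integrals into the quantities appearing on the right. First I would control the displacement error: a triangle inequality in the energy norm gives $\norm{u_h-u}^2 \lesssim \norm{u_h-u_h^0}^2 + \norm{u_h^0-u}^2$, where the first term equals $\sum_{T}\|\nabla_w u_h-\nabla u_h^0\|_{L^2(T)}^2$ and is bounded by $\eta_3^2$ through Lemma \ref{discrete_error}. For the second term I would write $u_h^0=(u_h^0)^c+((u_h^0)^c)^{\perp}$ as in Lemma \ref{decom}; the non-conforming part contributes a gradient term that Lemma \ref{decom} controls by $\sum_{e}\int_e|\sjump{u_h^0}|^2\,ds$, i.e.\ by $\eta_3^2$ up to the edge weights. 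This reduces the whole displacement estimate to bounding the conforming quantity $\sum_{T}\|\nabla(u_h^0)^c-\nabla u\|_{L^2(T)}^2$.

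This reduced quantity is the heart of the argument. I would telescope $\nabla(u_h^0)^c-\nabla u = (\nabla_w u_h-\nabla u)+(\nabla(u_h^0)^c-\nabla u_h^0)+(\nabla u_h^0-\nabla_w u_h)$ and expand the squared norm against the common factor $(\nabla(u_h^0)^c-\nabla u)$ into three pieces $Q_1+Q_2+Q_3$. The terms $Q_2,Q_3$ are routine: Cauchy--Schwarz followed by Lemma \ref{decom} (for $\nabla(u_h^0)^c-\nabla u_h^0$) and Lemma \ref{discrete_error} (for $\nabla u_h^0-\nabla_w u_h$) yields, for small $\epsilon,\epsilon_1>0$, a bound of the form $(\tfrac1\epsilon+\tfrac1{\epsilon_1})\eta_3^2+(\epsilon+\epsilon_1)\sum_{T}\|\nabla(u_h^0)^c-\nabla u\|_{L^2(T)}^2$, whose last summand will be absorbed later. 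The decisive step is $Q_1$: since $(u_h^0)^c\in V_h^C\subset H^1_0(\Omega)$ and $u\in H^1_0(\Omega)$, the difference $(u_h^0)^c-u$ is an admissible test function for $\mathcal{G}_h$, and because it is continuous its jumps vanish so the stabilization part of $a_h$ drops out. Rewriting the constant-per-element gradients through the weak gradient, I recognize $Q_1=-\mathcal{G}_h((u_h^0)^c-u)+(\lambda_h(u_h)-\lambda(u),\,u-(u_h^0)^c)_{-1,1}$ directly from \eqref{abstract:Galerkin}, and Young's inequality gives $Q_1\le \tfrac12\|\mathcal{G}_h\|_{H^{-1}(\Omega)}^2+\tfrac12\|(u_h^0)^c-u\|_{H^1(\Omega)}^2+(\lambda_h(u_h)-\lambda(u),\,u-(u_h^0)^c)_{-1,1}$. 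Choosing $\epsilon,\epsilon_1$ small enough to absorb the $\sum_{T}\|\nabla(u_h^0)^c-\nabla u\|^2$ contributions into the left then produces the displacement estimate in terms of $\|\mathcal{G}_h\|^2_{H^{-1}(\Omega)}$, $\eta_3^2$, and the duality pairing.

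Finally I would estimate the multiplier term through the dual-norm characterization. For $\xi\in H^1_0(\Omega)$ the definition \eqref{abstract:Galerkin} gives $(\lambda(u)-\lambda_h(u_h),\xi)_{-1,1}=\mathcal{G}_h(\xi)-a_h(u-u_h,\xi)$, and Cauchy--Schwarz bounds this by $\|\mathcal{G}_h\|_{H^{-1}(\Omega)}\|\xi\|_{H^1(\Omega)}+\big(\sum_{T}\|\nabla_w u_h-\nabla_w u\|_{L^2(T)}^2\big)^{1/2}\|\xi\|_{H^1(\Omega)}$, so that $\|\lambda(u)-\lambda_h(u_h)\|_{H^{-1}(\Omega)}^2\lesssim \|\mathcal{G}_h\|_{H^{-1}(\Omega)}^2+\sum_{T}\|\nabla_w u_h-\nabla_w u\|_{L^2(T)}^2$. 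Since this last sum is already controlled by the displacement bound just obtained, adding the two estimates and collecting constants $C_1,C_2$ independent of $h$ completes the proof.

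I expect the main obstacle to be the $Q_1$ identity. One must recognize $(u_h^0)^c-u$ as a legitimate conforming test function, verify that the non-consistency of the MWG method enters only through the stabilization term (which vanishes here precisely because the test function is continuous), and match the weak-gradient form of $a_h(u-u_h,\cdot)$ against the definition of $\mathcal{G}_h$. A secondary technical point is the passage from the classical gradient $\nabla u$ to the weak gradient $\nabla_w u$ inside $Q_1$, which must be justified via the $L^2$-projection property of $\nabla_w$ onto constants before the Galerkin functional can be invoked.
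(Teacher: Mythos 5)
Your proposal follows the paper's proof essentially step for step: the same energy-norm triangle inequality with Lemma \ref{discrete_error} giving $\eta_3^2$, the same conforming/non-conforming splitting via Lemma \ref{decom}, the identical $Q_1+Q_2+Q_3$ telescoping with Cauchy--Schwarz and Young absorption for $Q_2,Q_3$, the same identification $Q_1=-\mathcal{G}_h((u_h^0)^c-u)+(\lambda_h(u_h)-\lambda(u),\,u-(u_h^0)^c)_{-1,1}$, and the same dual-norm argument $(\lambda(u)-\lambda_h(u_h),\xi)_{-1,1}=\mathcal{G}_h(\xi)-a_h(u-u_h,\xi)$ for the multiplier. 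Your two flagged technical points (continuity of $(u_h^0)^c-u$ killing the stabilization term, and the passage from $\nabla u$ to $\nabla_w u$ via the $L^2$-projection property) are exactly the justifications implicit in the paper's treatment of $Q_1$, so the proposal is correct and matches the paper's route.
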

To obtain reliable estimates, we now focus on deriving bounds for \( \|\mathcal{G}_h\|_{H^{-1}(\Omega)} \) and \( (\lambda_h(u_h) - \lambda(u), u - (u_h^0)^c)_{-1,1} \), which will be addressed in the following two lemmas.
\begin{lemma}
It holds that 
\begin{align*}
\|\mathcal{G}_h\|^2_{H^{-1}(\Omega)} \lesssim  \eta_1^2 + \eta_2^2.
 \end{align*}
 \begin{proof}
 Let $\Phi \in H^1_0(\Omega).$ Then, there exists a Clement approximation of $\Phi$, denoted by $\Phi_c \in V_h^C$ (see Lemma \ref{clement}) such that 
 \begin{align}\label{appprox}
 \sum_{T \in \mathcal{T}_h} \bigg( h_T^{-2} \| \Phi -\Phi_c \|^2_{L^2(T)} + \|\nabla(\Phi -\Phi_c)\|^2_{L^2(T)} \bigg) \lesssim \|\nabla{\Phi}\|_{L^2(\Omega)}.
 \end{align}
Consider 
 \begin{align*}
\mathcal{G}_h(\Phi) = \mathcal{G}_h(\Phi - \Phi_c) + \mathcal{G}_h( \Phi_c). 
 \end{align*}
 We will estimate these terms individually. The bound on $\mathcal{G}_h(\Phi - \Phi_c) $ is derived using the following reasoning:
 \begin{align}\label{3.1}
 \mathcal{G}_h(\Phi - \Phi_c) &= a_h(u-u_h, \Phi - \Phi_c) +   ( \lambda(u)-\lambda_h(u_h), ~\Phi - \Phi_c
  )_{-1,1}  \nonumber\\
  & = \sum_{T \in \mathcal{T}_h}\int\limits_T \nabla_w(u-u_h)\cdot\nabla(\Phi - \Phi_c)~dx + \sum_{T \in \mathcal{T}_h}\int\limits_T (\lambda(u)-\lambda_h(u_h))(\Phi - \Phi_c)~dx\nonumber \\
  &= \sum_{T \in \mathcal{T}_h}\int\limits_T f(\Phi - \Phi_c)~dx - \sum_{T \in \mathcal{T}_h}\int\limits_T \nabla_w u_h\cdot\nabla(\Phi - \Phi_c)~dx - \sum_{T \in \mathcal{T}_h}\int\limits_T \lambda_h(u_h)(\Phi - \Phi_c)~dx,
 \end{align}
using the relation \eqref{eq:Lambda} in the last equation.  A use of integration by parts in the second term of \eqref{3.1}, we obtain 
 \begin{align*}
\mathcal{G}_h(\Phi - \Phi_c) &=  \sum_{T \in \mathcal{T}_h}\int\limits_T f(\Phi - \Phi_c)~dx + \sum_{T \in \mathcal{T}_h}\int\limits_T \nabla \cdot(\nabla_w u_h)(\Phi - \Phi_c)~dx - \sum_{T \in \mathcal{T}_h}\int\limits_{\partial T}(\nabla_w u_h\cdot \b{n})(\Phi - \Phi_c)~ds \\&- \sum_{T \in \mathcal{T}_h}\int\limits_T \lambda_h(u_h)(\Phi - \Phi_c)~dx \\
&= \sum_{T \in \mathcal{T}_h}\int\limits_T (f + \nabla \cdot(\nabla_w u_h) - \lambda_h(u_h) )(\Phi - \Phi_c)~dx - \sum_{T \in \mathcal{T}_h}\int\limits_{\partial T}(\nabla_w u_h\cdot \b{n})(\Phi - \Phi_c)~ds \\
&= \sum_{T \in \mathcal{T}_h}\int\limits_T (f + \nabla \cdot(\nabla_w u_h) - \lambda_h(u_h) )(\Phi - \Phi_c)~dx - \sum_{e \in \mathcal{E}^i_h} \int\limits_e \sjump{\nabla_w u_h}\smean{\Phi-\Phi_c}~ds \\&- \sum_{e \in \mathcal{E}_h} \int\limits_e \smean{\nabla_w u_h}\cdot\sjump{\Phi-\Phi_c}~ds. 
\end{align*}
Since on interior edges, $\sjump{\Phi-\Phi_c} = (0,0), \smean{\Phi-\Phi_c} = \Phi-\Phi_c$, and on boundary edges $\Phi-\Phi_c = 0$, we obtain
\begin{align*}
\mathcal{G}_h(\Phi - \Phi_c) &= \sum_{T \in \mathcal{T}_h}\int\limits_T (f + \nabla \cdot(\nabla_w u_h) - \lambda_h(u_h) )(\Phi - \Phi_c)~dx - \sum_{e \in \mathcal{E}^i_h} \int\limits_e \sjump{\nabla_w u_h}(\Phi-\Phi_c)~ds \\
&\leq \sum_{T \in \mathcal{T}_h}h_T \| f + \nabla \cdot(\nabla_w u_h) - \lambda_h(u_h) \|_{L^2(T)}h_T^{-1}\|\Phi - \Phi_c\|_{L^2(T)} \\&+ \sum_{e \in \mathcal{E}^i_h} h_e^{\frac{1}{2}}\|\sjump{\nabla_w u_h}\|_{L^2(e)}h_e^{-\frac{1}{2}}\|\Phi-\Phi_c\|_{L^2(e)}.
\end{align*}
Exploiting the approximation properties of the interpolation operator, discrete trace inequality together with discrete Cauchy Schwarz inequality, we obtain
\begin{align*}
\mathcal{G}_h(\Phi - \Phi_c) 
&\lesssim \Bigg(\bigg(\sum_{T \in \mathcal{T}_h}h^2_T \| f + \nabla \cdot(\nabla_w u_h) - \lambda_h(u_h) \|^2_{L^2(T)}\bigg)^{\frac{1}{2}} + \bigg( \sum_{e \in \mathcal{E}^i_h} h_e\|\sjump{\nabla_w u_h}\|^2_{L^2(e)}  \bigg)^{\frac{1}{2}}\Bigg) \|\nabla \Phi\|_{L^2(\Omega)} \\
&= \big( \eta_1 +\eta_2 \big)\|\nabla \Phi\|_{L^2(\Omega)}.
\end{align*}
Next, we estimate $\mathcal{G}_h( \Phi_c)$ in the following way 
\begin{align*}
\mathcal{G}_h( \Phi_c) &= \sum_{T \in \mathcal{T}_h}\int\limits_T\nabla_w(u-u_h) \cdot \nabla_w(\Phi_c)~dx + \sum_{T \in \mathcal{T}_h}\int\limits_T (\lambda(u)-\lambda_h(u_h))\Phi_c~dx  \\
&= \sum_{T \in \mathcal{T}_h}\int\limits_T f\Phi_c~dx -\int\limits_T\nabla_w u_h \cdot \nabla_w \Phi_c~dx- \sum_{T \in \mathcal{T}_h}\int\limits_T \lambda_h(u_h)\Phi_c~dx, 
\end{align*}
where the last equality follows from \eqref{eq:Lambda}. Further a use of Lemma \ref{lang_prop} and exploiting Remarks \ref{remark11} and \ref{remark 1}, we obtain 
\begin{align*}
\mathcal{G}_h( \Phi_c) &= (\lambda_h(u_h),\Phi_c)_{L^2(\Omega)} -\sum_{T \in \mathcal{T}_h}\int\limits_T \lambda_h(u_h)\Phi_c~dx =0.
\end{align*}
This completes the proof.
 \end{proof}
\end{lemma}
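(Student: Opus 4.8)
The plan is to estimate the dual norm $\|\mathcal{G}_h\|_{H^{-1}(\Omega)} = \sup_{\Phi \in H^1_0(\Omega)} \mathcal{G}_h(\Phi)/\|\nabla \Phi\|_{L^2(\Omega)}$ by testing against an arbitrary $\Phi \in H^1_0(\Omega)$ and splitting it through a conforming interpolant. First I would invoke Lemma \ref{clement} to produce a Cl\'ement approximation $\Phi_c \in V_h^C$ satisfying the local estimates $h_T^{-1}\|\Phi - \Phi_c\|_{L^2(T)} + \|\nabla(\Phi - \Phi_c)\|_{L^2(T)} \lesssim \|\nabla \Phi\|_{L^2(\mathcal{T}_T)}$, and then write $\mathcal{G}_h(\Phi) = \mathcal{G}_h(\Phi - \Phi_c) + \mathcal{G}_h(\Phi_c)$. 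The two summands are handled by genuinely different mechanisms: $\mathcal{G}_h(\Phi_c)$ should vanish by a Galerkin-type orthogonality, whereas $\mathcal{G}_h(\Phi - \Phi_c)$ is where the residual estimators $\eta_1$ and $\eta_2$ are generated.

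For the conforming part, the key observation is that $\Phi_c \in V_h^C \subset V_h^{0w}$ by Remark \ref{remark11}, so it is an admissible test function in the discrete identity of Lemma \ref{lang_prop}, giving $(\lambda_h(u_h), \Phi_c)_{L^2(\Omega)} = L(\Phi_c) - a_h(u_h, \Phi_c)$. Substituting the definition \eqref{eq:Lambda} of $\lambda(u)$, which yields $a(u, \Phi_c) + (\lambda(u), \Phi_c)_{-1,1} = L(\Phi_c)$, into the definition \eqref{abstract:Galerkin} of $\mathcal{G}_h$ and identifying $\lambda_h(u_h)$ with its action as a functional on $V$, I expect the continuous and discrete contributions to cancel term by term, so that $\mathcal{G}_h(\Phi_c) = 0$.

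For the nonconforming part I would first use \eqref{eq:Lambda} to rewrite $a(u, \Phi - \Phi_c) + (\lambda(u), \Phi - \Phi_c)_{-1,1}$ as $\int_\Omega f(\Phi - \Phi_c)\,dx$, thereby eliminating every dependence on the unknown exact solution $u$. What remains is a sum of element integrals $\int_T \nabla_w u_h \cdot \nabla(\Phi - \Phi_c)\,dx$ together with $-\int_T \lambda_h(u_h)(\Phi - \Phi_c)\,dx$. Integrating by parts element by element on the weak-gradient term exposes the interior residual $f + \nabla\cdot(\nabla_w u_h) - \lambda_h(u_h)$ and the edge contributions $\int_{\partial T}(\nabla_w u_h \cdot \b{n})(\Phi - \Phi_c)\,ds$. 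Recasting the boundary sum over edges through the jump--mean identity, I would use that $\Phi - \Phi_c \in H^1_0(\Omega)$ is single-valued (zero jump) across interior edges and vanishes on $\partial\Omega$, so only the interior-edge terms $\sjump{\nabla_w u_h}(\Phi - \Phi_c)$ survive. A weighted Cauchy--Schwarz inequality, the trace inequality \eqref{trace}, and the Cl\'ement estimates then deliver $\mathcal{G}_h(\Phi - \Phi_c) \lesssim (\eta_1 + \eta_2)\|\nabla \Phi\|_{L^2(\Omega)}$. Combining this with $\mathcal{G}_h(\Phi_c) = 0$, taking the supremum over $\Phi$, and squaring completes the estimate.

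I expect the main obstacle to be the bookkeeping in the nonconforming part, namely correctly collapsing the element-boundary sum into interior-edge jumps after integration by parts while keeping track of which contributions the homogeneous boundary condition and continuity of $\Phi - \Phi_c$ annihilate. The vanishing of $\mathcal{G}_h(\Phi_c)$ is the conceptual crux: it hinges on the inclusion $V_h^C \subset V_h^{0w}$ and on the compatibility between the continuous definition of $\lambda(u)$ and the discrete identity for $\lambda_h(u_h)$, so verifying that these two are aligned on the conforming test space is the step to treat most carefully.
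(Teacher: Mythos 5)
Your proposal follows essentially the same route as the paper's proof: the same Cl\'ement splitting $\mathcal{G}_h(\Phi)=\mathcal{G}_h(\Phi-\Phi_c)+\mathcal{G}_h(\Phi_c)$, the same vanishing of $\mathcal{G}_h(\Phi_c)$ via the inclusion $V_h^C\subset V_h^{0w}$, Lemma \ref{lang_prop}, and the definition \eqref{eq:Lambda}, and the same elementwise integration by parts with the jump--mean identity to extract the interior residual and edge jumps bounded by $\eta_1$ and $\eta_2$. You also correctly identify the two genuinely delicate points (the collapse of the element-boundary sum using continuity and the boundary condition of $\Phi-\Phi_c$, and the compatibility of the continuous and discrete multiplier identities on conforming test functions), so the plan is sound as written.
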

In the next lemma, we aim to estimate $(\lambda_h(u_h)-\lambda(u), u-(u_h^0)^c)_{-1,1}.$
\begin{lemma}
It holds that,
\begin{align*}
\hspace{ 2 cm}
( \lambda_h(u_h)-\lambda(u), u - (u_h^0)^c )_{-1,1} &\leq  \epsilon \| \lambda(u) -\lambda_h(u)\|_{H^{-1}(\Omega)}^2 + \frac{1}{\epsilon}  \|\nabla (\psi-(u_h^0)^c)^+\|^2_{L^2(\Omega)} \\ &-\sum_{T \in \mathcal{C}_h}  \int_T \lambda_h(u_h) ((u_h^0)^c - \psi)^{+} ~dx
\end{align*}
for a sufficiently small $\epsilon>0.$
\end{lemma}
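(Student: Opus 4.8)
The plan is to decompose the target duality pairing by splitting the test direction $u-(u_h^0)^c$ into two parts using the positive/negative cutoff at the obstacle. Specifically, I would write $u - (u_h^0)^c = (u - \psi) + (\psi - (u_h^0)^c)$ and further decompose $\psi - (u_h^0)^c = (\psi - (u_h^0)^c)^+ - ((u_h^0)^c - \psi)^+$, so that the sign of each piece is controlled. The idea is that on the contact region $u=\psi$ and $\lambda(u)$ has a definite sign, while on the non-contact region $\lambda(u)\equiv 0$ (by the continuous Lagrange multiplier lemma), and similarly $\lambda_h(u_h)$ vanishes on $\mathcal{N}_h$ and is nonpositive everywhere (by Lemma \ref{sign_langrange}). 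Exploiting these sign properties is what converts the raw pairing into the one-signed boundary terms appearing on the right-hand side.

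First I would expand
\begin{align*}
( \lambda_h(u_h)-\lambda(u),\, u - (u_h^0)^c )_{-1,1} &= ( \lambda_h(u_h)-\lambda(u),\, u - \psi )_{-1,1} \\ &\quad + ( \lambda_h(u_h)-\lambda(u),\, \psi - (u_h^0)^c )_{-1,1}.
\end{align*}
For the first summand I would use $(\lambda(u), u-\psi)_{-1,1}=0$ from the continuous Lagrange multiplier lemma, leaving $(\lambda_h(u_h), u-\psi)_{-1,1}$, which is nonpositive since $\lambda_h(u_h)\le 0$ and $u-\psi\ge 0$ a.e. (recall $u\in\mathcal{K}$). For the second summand I would insert the cutoff decomposition $\psi-(u_h^0)^c = (\psi-(u_h^0)^c)^+ - ((u_h^0)^c-\psi)^+$. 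The term paired with $(\psi-(u_h^0)^c)^+$ is estimated by Cauchy--Schwarz in the duality pairing and Young's inequality, producing $\epsilon\|\lambda(u)-\lambda_h(u)\|_{H^{-1}(\Omega)}^2 + \frac{1}{\epsilon}\|\nabla(\psi-(u_h^0)^c)^+\|_{L^2(\Omega)}^2$; here I would use that $(\psi-(u_h^0)^c)^+\in H^1_0(\Omega)$ so its $H^1$-seminorm controls the pairing against an $H^{-1}$ functional.

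The remaining piece is $-( \lambda_h(u_h)-\lambda(u),\, ((u_h^0)^c-\psi)^+ )_{-1,1}$, which must be reduced to the single sum $-\sum_{T\in\mathcal{C}_h}\int_T \lambda_h(u_h)((u_h^0)^c-\psi)^+\,dx$ over the contact set alone. Here I would argue that the $\lambda(u)$ contribution drops out: on the set where $((u_h^0)^c-\psi)^+>0$ one has $(u_h^0)^c>\psi$, and combined with the complementarity of $\lambda(u)$ (which vanishes off the continuous contact set) this term is nonnegative or zero after accounting for signs, so it can be discarded in the upper bound. For the $\lambda_h(u_h)$ contribution I would use that $\lambda_h(u_h)\equiv 0$ on every $T\in\mathcal{N}_h$ (Lemma \ref{sign_langrange}), so the sum over all of $\mathcal{T}_h$ collapses to a sum over $\mathcal{C}_h$ only. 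The main obstacle will be justifying precisely which terms may be dropped versus retained when passing to an inequality: one must verify that the discarded $\lambda(u)$-terms have the favorable sign and that the elementwise vanishing of $\lambda_h(u_h)$ on $\mathcal{N}_h$ is applied correctly against the nonnegative factor $((u_h^0)^c-\psi)^+$. Care is also needed because $(u_h^0)^c$ is the conforming part of the decomposition (Lemma \ref{decom}) rather than $u_h^0$ itself, so the contact/non-contact classification in \eqref{eq:contact}--\eqref{eq:NonC} is stated via $\pi^0_h$ and must be reconciled with the pointwise cutoff $((u_h^0)^c-\psi)^+$; tracking this discrepancy carefully is the delicate step of the argument.
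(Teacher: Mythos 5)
Your proof is correct and is essentially the paper's own argument: the paper merely packages your cutoff decomposition through the auxiliary function $\tilde{u}_h:=\max((u_h^0)^c,\psi)\in\mathcal{K}$ (so that $\tilde{u}_h-(u_h^0)^c=(\psi-(u_h^0)^c)^+$ and $\psi-\tilde{u}_h=-((u_h^0)^c-\psi)^+$), using \eqref{eq:Prop1} with $v=\tilde{u}_h$ where you use the complementarity identity $(\lambda(u),u-\psi)_{-1,1}=0$, and your three remaining steps — the sign $(\lambda_h(u_h),u-\psi)_{-1,1}\le 0$, the duality–Young bound on the pairing with $(\psi-(u_h^0)^c)^+\in H^1_0(\Omega)$, and the collapse of the sum to $\mathcal{C}_h$ via $\lambda_h(u_h)=0$ on $\mathcal{N}_h$ — coincide term by term with the paper's. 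One correction to your wording: the discarded contribution $(\lambda(u),((u_h^0)^c-\psi)^+)_{-1,1}$ is \emph{nonpositive} (simply because $\lambda(u)\le 0$ is paired against a nonnegative function), not nonnegative, and the appeal to complementarity of $\lambda(u)$ on the set $\{(u_h^0)^c>\psi\}$ is neither needed nor valid there, since $(u_h^0)^c>\psi$ gives no information about where $u>\psi$.
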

\begin{proof} 
In the light of Remark \ref{remark11}, we find that $(u_h^0)^c \in V_h^{0w}.$ Utilizing this, we define the function $\tilde{u_h}:= max((u_h^0)^c, \psi)$ which belongs to $\mathcal{K}.$ Using equation \eqref{eq:Prop1}, we ensure that $ ( \lambda(u), \tilde{u_h} - u )_{-1,1} \leq 0.$  Further, taking into the account that $\lambda_h(u_h) \leq 0$ and $u \geq \psi$, we have
\begin{align*}
( \lambda_h(u_h)-\lambda(u), u &- (u_h^0)^c )_{-1,1}\\ &= 
( \lambda_h(u_h), u - (u_h^0)^c )_{-1,1} -( \lambda(u), u - \tilde{u_h})_{-1,1} - ( \lambda(u), \tilde{u_h} - (u_h^0)^c )_{-1,1} \\
&\leq ( \lambda_h(u_h), u - (u_h^0)^c )_{-1,1} - ( \lambda(u), \tilde{u_h} - (u_h^0)^c )_{-1,1} \\
&= ( \lambda_h(u_h), u - (u_h^0)^c - \tilde{u_h} + (u_h^0)^c )_{-1,1} - ( \lambda(u) - \lambda_h(u_h), \tilde{u_h} - (u_h^0)^c )_{-1,1} \\
&=( \lambda_h(u_h), u - \psi )_{-1,1} + ( \lambda_h(u_h), \psi - \tilde{u_h} )_{-1,1} - ( \lambda(u) - \lambda_h(u_h), \tilde{u_h} - (u_h^0)^c )_{-1,1} \\ 
&\leq ( \lambda_h(u_h), \psi - \tilde{u_h} )_{-1,1} - ( \lambda(u) - \lambda_h(u_h), \tilde{u_h} - (u_h^0)^c )_{-1,1}.
\end{align*}
Since $\psi - \tilde{u_h}= - ((u_h^0)^c - \psi)^{+}$ and recalling that $\lambda_h(u_h) = 0 $ on triangles $T \in \mathcal{N}_h,$ we deduce 
\begin{align*}
( \lambda_h(u_h)-\lambda(u), u &- (u_h^0)^c )_{-1,1} &\leq -\sum_{T \in \mathcal{C}_h}  \int_T \lambda_h(u_h) ((u_h^0)^c - \psi)^{+} ~dx - ( \lambda(u) - \lambda_h(u_h), \tilde{u_h} - (u_h^0)^c )_{-1,1}.
\end{align*}
Finally, using Young's inequality, we obtain
\begin{align*}
( \lambda_h(u_h)-\lambda(u), u - (u_h^0)^c )_{-1,1} &\leq -\sum_{T \in \mathcal{C}_h}  \int_T \lambda_h(u_h) ((u_h^0)^c - \psi)^{+} ~dx + \epsilon \| \lambda(u) -\lambda_h(u_h)\|_{H^{-1}(\Omega)}^2 \\&+ \frac{1}{\epsilon}  \|\nabla (\psi-(u_h^0)^c)^+\|^2_{L^2(\Omega)}
\end{align*}
for some $\epsilon >0,$ thereby, proving the claim. 
\end{proof}

\subsection{Local Efficiency Estimates}

In this subsection, we present the local efficiency estimates, which can be derived using the standard bubble function technique. To begin, we define \( \text{Osc}(f, \Omega) \) as:  
\begin{align*}
\text{Osc}(f, \Omega) := \sum_{T \in \mathcal{T}_h} \text{Osc}(f, T),
\end{align*}
where  
\begin{align*}
\text{Osc}(f, T) := \bigg(h_T^2 \|f - \overline{f}\|^2_{L^2(T)}\bigg)^{\frac{1}{2}}.
\end{align*}
Here, \( \bar{f} \) denotes the \( L^2 \)-projection of $f$ onto the space of piecewise constant functions on $\Omega$.
\par
\noindent
\textit{(i) Bound for $\eta_1$:} 
Let \( T \in \mathcal{T}_h \) be an arbitrary element, and let \( \kappa \in \mathbb{P}_3(T) \) denote the bubble function associated with \( T \). This function vanishes on the boundary \( \partial T \) and takes unit value at the barycenter of \( T \). Using the equivalence of norms in finite dimensional spaces, we can establish the following estimate:
\begin{align}
\|{\bar{f}} + \nabla \cdot(\nabla_wu_h)-\lambda_h(u_h)\|^2_{{L^2}(T)} 
&\lesssim \int_T \kappa \big({\bar{f}} + \nabla \cdot(\nabla_wu_h)-\lambda_h(u_h)\big)\big({\bar{f}}  + \nabla \cdot(\nabla_wu_h)-\lambda_h(u_h)\big) \, dx.
\end{align}

Next, define \( {\upsilon} := \kappa \big({\bar{f}} + \nabla \cdot(\nabla_wu_h)-\lambda_h(u_h)\big) \). Extend \( {\upsilon} \) by setting it to zero outside \( T \), so that \( {\upsilon} \in {H^1_0}(\Omega) \). Using integration by parts, relation \eqref{eq:Lambda} and a standard inverse estimate, we derive:
\begin{align}
\int_T \kappa \big({\bar{f}} &+ \nabla \cdot(\nabla_wu_h)-\lambda_h(u_h)\big)  \big({\bar{f}}  + \nabla \cdot(\nabla_wu_h)-\lambda_h(u_h)\big) \, dx \notag\\
&= \int\limits_T {f}  {\upsilon} \, dx + \int\limits_T ({\bar{f}} - {f})  {\upsilon} \, dx + \int\limits_T \nabla \cdot(\nabla_wu_h) {\upsilon} \, dx - \int\limits_T \lambda_h(u_h)  {\upsilon} \, dx  \notag \\
&= a({u}, {\upsilon}) + ( \lambda(u), \upsilon)_{-1,1,T} + \int\limits_T ({\bar{f}} - {f})  {\upsilon} \, dx - \int\limits_T \nabla_wu_h \cdot {\nabla \upsilon} \, dx - ( \lambda_h(u_h),  {\upsilon})_{-1,1,T}  \notag \\
&= \int\limits_T \big({\nabla}{u} - \nabla_wu_h) \cdot \nabla \upsilon \, dx + \int\limits_T ({\bar{f}} - {f})  {\upsilon} \, dx +(\lambda(u)-\lambda_h(u_h), {\upsilon})_{-1,1,T}.
\end{align}

Applying  Cauchy Schwarz inequality, we obtain:
\begin{align}
\int_T \kappa \big({\bar{f}} &+ \nabla \cdot(\nabla_wu_h)-\lambda_h(u_h)\big) \big({\bar{f}}  + \nabla \cdot(\nabla_wu_h)-\lambda_h(u_h)\big) \, dx \nonumber\\
&\lesssim \|\nabla u-\nabla_wu_h\|_{{L^2}(T)} |{\upsilon}|_{{H^1}(T)} + \|{\bar{f} - f}\|_{{L^2}(T)} \|{\upsilon}\|_{{L^2}(T)}+\|\lambda(u)-\lambda_h(u_h)\|_{H^{-1}(T)}|v|_{H^1(T)}.
\end{align}

Using the standard inverse estimate \( |{\upsilon}|_{{H^1}(T)} \lesssim h_T^{-1} \|{\upsilon}\|_{{L^2}(T)} \), we have:
\begin{align}
\|&{\bar{f}} + \nabla \cdot(\nabla_wu_h)-\lambda_h(u_h)\|^2_{{L^2}(T)} \notag \\
&\lesssim \bigg(h^{-1}_T \|\nabla u-\nabla_wu_h\|_{{L^2}(T)} + \|{\bar{f} - f}\|_{{L^2}(T)} +h^{-1}_T\|\lambda(u)-\lambda_h(u_h)\|_{H^{-1}(T)}\bigg)\|{\upsilon}\|_{{L^2}(T)} \notag \\
&\lesssim \bigg(h^{-1}_T \|\nabla u-\nabla_wu_h\|_{{L^2}(T)} + \|{\bar{f} - f}\|_{{L^2}(T)} +h^{-1}_T\|\lambda(u)-\lambda_h(u_h)\|_{H^{-1}(T)}\bigg)\|{\bar{f}} + \nabla \cdot(\nabla_wu_h)-\lambda_h(u_h)\|_{{L^2}(T)}. \notag
\end{align}
Finally, we establish:
\begin{align*}
h_T^2 \|{\bar{f}} + \nabla \cdot(\nabla_wu_h)-\lambda_h(u_h)\|^2_{{L^2}(T)} 
&\lesssim \|\nabla u-\nabla_wu_h\|^2_{{L^2}(T)} + h^2_T\|{\bar{f} - f}\|^2_{{L^2}(T)} +\|\lambda(u)-\lambda_h(u_h)\|^2_{H^{-1}(T)}.
\end{align*}
Using the triangle inequality, we conclude:
\begin{align*}
h_T^2 \|{{f}} + \nabla \cdot(\nabla_wu_h)-\lambda_h(u_h)\|^2_{{L^2}(T)} 
&\lesssim \|\nabla u-\nabla_wu_h\|^2_{{L^2}(T)} + (Osc(f,T))^2 +\|\lambda(u)-\lambda_h(u_h)\|^2_{H^{-1}(T)}.
\end{align*}

\par
\noindent
\textit{(ii) Bound for $\eta_2$:} 
Consider an interior edge \( e \in \mathcal{E}^i_h \) shared by two neighbouring elements \( T^{-} \) and \( T^{+} \), such that \( e = \partial T^{-} \cap \partial T^{+} \). Let \( \b{n_e} \) represent the outward unit normal vector to \( e \), oriented from \( T^{-} \) to \( T^{+} \). We define a bubble function \( \kappa \in \mathbb{P}_2(T^{-} \cup T^{+}) \), which vanishes on the boundary of the quadrilateral formed by \( T^{-} \cup T^{+} \) and equals 1 at the midpoint of \( e \). Define ${\beta} := \kappa {\kappa_{1}} \quad \text{on } T^{-} \cup T^{+},$
where \( {\kappa_{1}} \in [\mathbb{P}_0(T^{-} \cup T^{+})] \) is given by \( {{\kappa_{1}}} = \sjump{\nabla_wu_h} \) on \( e \). Extend \( {\beta} \) by zero outside \( T^{-} \cup T^{+} \), ensuring \( {\beta} \in [{H^1_0}(\Omega)] \). Using the equivalence of norms in finite dimensional spaces, we obtain
\begin{align}\label{pp5:eqn1}
    \|{\kappa_{1}}\|^2_{{L^2}(e)} &\lesssim \int\limits_e \kappa {\kappa_{1}} \cdot {\kappa_{1}}~ds 
    = \int\limits_e {\beta} \cdot {\kappa_{1}}~ds.
\end{align}
Applying integration by parts, relation \eqref{eq:Lambda}, Cauchy Schwarz inequality, and standard inverse estimates, we obtain
\begin{align}\label{eqn1}
    \int_e {\sjump{\nabla_wu_h} } \cdot {\beta}~ds &= \int\limits_{T^{-} \cup T^{+}} {\nabla_wu_h}\cdot \nabla \beta~dx + \int\limits_{T^{-} \cup T^{+}} \nabla\cdot(\nabla_wu_h)\beta~dx \nonumber \\
    &= \int\limits_{T^{-} \cup T^{+}} {\nabla_wu_h}\cdot \nabla \beta~dx + \int\limits_{T^{-} \cup T^{+}} (\nabla\cdot(\nabla_wu_h) + f - \lambda_h(u_h))\beta~dx \nonumber  \\
    &\quad - \int\limits_{T^{-} \cup T^{+}} {\nabla u_h}\cdot \nabla \beta~dx -(\lambda(u),\beta)_{-1,1,T^{+}\cup T^{-}}+(\lambda_h(u_h),\beta)_{-1,1,T^{+}\cup T^{-}} \nonumber \\
    &\leq \sum_{T \in \mathcal{T}_e} \bigg(  \|\nabla u -\nabla_w u_h\|_{{L^2}(T)} |{\beta}|_{{H^1}(T)} + \|\nabla\cdot(\nabla_wu_h) + f - \lambda_h(u_h)\|_{{L^2}(T)} \|{\beta}\|_{{L^2}(T)}  \nonumber \\
    &+ \|\lambda(u)-\lambda_h(u_h)\|_{H^{-1}(T)}|{\beta}|_{{H^1}(T)}\bigg) \nonumber \\
      &\leq \sum_{T \in \mathcal{T}_e} \bigg(  \|\nabla u -\nabla_w u_h\|_{{L^2}(T)} h^{-1}_T + \|\nabla\cdot(\nabla_wu_h) + f - \lambda_h(u_h)\|_{{L^2}(T)}   \nonumber \\
    &+ h^{-1}_T\|\lambda(u)-\lambda_h(u_h)\|_{H^{-1}(T)}\bigg)\|{\beta}\|_{{L^2}(T)} \nonumber \\  
   &\leq \sum_{T \in \mathcal{T}_e} \bigg(  \|\nabla u -\nabla_w u_h\|_{{L^2}(T)} h^{-1}_T + \|\nabla\cdot(\nabla_wu_h) + f - \lambda_h(u_h)\|_{{L^2}(T)} \nonumber   \\
    &+ h^{-1}_T\|\lambda(u)-\lambda_h(u_h)\|_{H^{-1}(T)}\bigg)h_e^{\frac{1}{2}}\|{\kappa_{1}}\|_{{L^2}(e)}.
\end{align}
Combining the bounds obtained in equation \eqref{pp5:eqn1} and \eqref{eqn1}, we obtain
\begin{align*}
    h_e^{\frac{1}{2}} \|\sjump{\nabla_wu_h}\|_{{L^2}(e)} &\lesssim   \|\nabla u -\nabla_w u_h\|_{{L^2}(\mathcal{T}_e)} + h_e\|\nabla\cdot(\nabla_wu_h) + f - \lambda_h(u_h)\|_{{L^2}(\mathcal{T}_e)} +\|\lambda(u)-\lambda_h(u_h)\|_{H^{-1}(\mathcal{T}_e)}.
\end{align*}
Thus, the estimate \((ii)\) follows from \((i)\).
\\
\par
\noindent
\textit{(iii) Bound for $-\int_T \lambda_h(u_h) ((u_h^0)^c - \psi)^{+} ~dx,~T \in \mathcal{C}_h$:} 
\begin{align*}
-\int\limits_T \lambda_h(u_h) &((u_h^0)^c - \psi)^{+} ~dx \nonumber \\& \leq  \int\limits_T (f+\nabla\cdot\nabla_wu_h-\lambda_h(u_h))((u_h^0)^c - \psi)^{+}~dx  +\int\limits_T (-f)((u_h^0)^c - \psi)^{+}~dx \\
& \leq \| f+\nabla\cdot\nabla_wu_h-\lambda_h(u_h) \|_{L^2(T)}   \|((u_h^0)^c - \psi)^{+}\|_{L^2(T)}+\int\limits_T (-f)((u_h^0)^c - \psi)^{+}~dx \\
& \leq \|\nabla u-\nabla_wu_h\|_{{L^2}(T)} + Osc(f,T) +\|\lambda(u)-\lambda_h(u_h)\|_{H^{-1}(T)} \\&+ h_T^{-1} \|(u_h^0)^c - \psi)^{+}\|_{L^2(T)} + \int\limits_T (-f )((u_h^0)^c - \psi)^{+}~dx.
\end{align*}
\par
\noindent
\textit{(iv) Bound for $\|\nabla (\psi-(u_h^0)^c)^+\|^2_{L^2(T)}$:} 
\begin{align*}
\|\nabla (\psi-\tilde{u}_h)^+\|^2_{L^2(T)} 
&\lesssim \int\limits_{T}(\nabla{\psi}-\nabla{\bar{\psi}})^2~dx + \int\limits_{T}(\nabla({\bar{\psi}} - u_h)^+)^2~dx + \int\limits_{T}(\nabla u_h-\nabla \tilde{u}_h)^2~dx  \\
&\lesssim \|\nabla{\psi}-\nabla{\bar{\psi}}\|^2_{L^2(T)} + \eta_3^2 + \int\limits_{T}(\nabla({\bar{\psi}} - u_h)^+)^2~dx,
\end{align*}
where, the last inequality follows from Lemma \ref{decom}.
\section{Numerical Experiments}
In this section, we conduct a series of numerical experiments to evaluate the effectiveness of the error indicator \eqref{eq:Est1} and the adaptive modified weak Galerkin method. All simulations are implemented using MATLAB (version R2020b). The adaptive algorithm follows the paradigm outlined below:
\begin{center}
\textbf{SOLVE} $\longrightarrow$ \textbf{ESTIMATE} $\longrightarrow$ \textbf{MARK} $\longrightarrow$ \textbf{REFINE}
\end{center}
The \textbf{SOLVE} step involves calculating the discrete solution ${u_h}$ by solving the discrete variational inequality \eqref{discrete} using the primal-dual active set strategy \cite{Primal_obstacle}. Following this, the \textbf{ESTIMATE} step computes the elementwise error estimator $\eta_h$ as described in Section 4. In the \textbf{MARK} step, we apply D\"orfler's marking strategy \cite{Dorfler:1996:Afem} with a parameter $\theta = 0.4$ to identify the elements of the triangulation that need refinement. Finally, in the \textbf{REFINE} step, the marked elements are refined using the newest vertex bisection algorithm \cite{Verfurth:1994:Adaptive}, generating a new mesh. This process is iterated until the desired level of accuracy is achieved.
\par 
\noindent
We start by constructing the conforming component of the discontinuous solution, denoted as $(u_h^0)^c \in H^1(\Omega)$. While there are multiple methods to approximate this component, we employ an averaging technique. Specifically, it is defined as follows:
\begin{align*}
(u_h^0)^c(p) & := \dfrac{1}{|\mathcal{T}_p|} \sum_{T \in \mathcal{T}_p} u_h^0|_T(p), ~u_h \in V_h^{0w}.
\end{align*}
The first model problem, inspired by \cite{Nochetto:2005}, is designed such that the exact solution $u$ is not known \textit{a priori}. In contrast, the exact solutions for the subsequent two examples are explicitly known. In all examples, we numerically confirm that the estimators converge at the optimal rate of $\frac{1}{\sqrt{\text{Ndof}}}$, where Ndof represents the number of degrees of freedom.
\begin{example} Consider the domain $\Omega = (-2,2)\times (-1,1)$ with homogeneous boundary data. We introduce two constant load functions $f=0$ and $f=-15$, and a smooth obstacle defined by
\begin{align*}
\psi =10-6(x^2-1)^2-20 (r^2-x^2), \quad r^2=x^2+y^2.
\end{align*}
\end{example}
Figure \ref{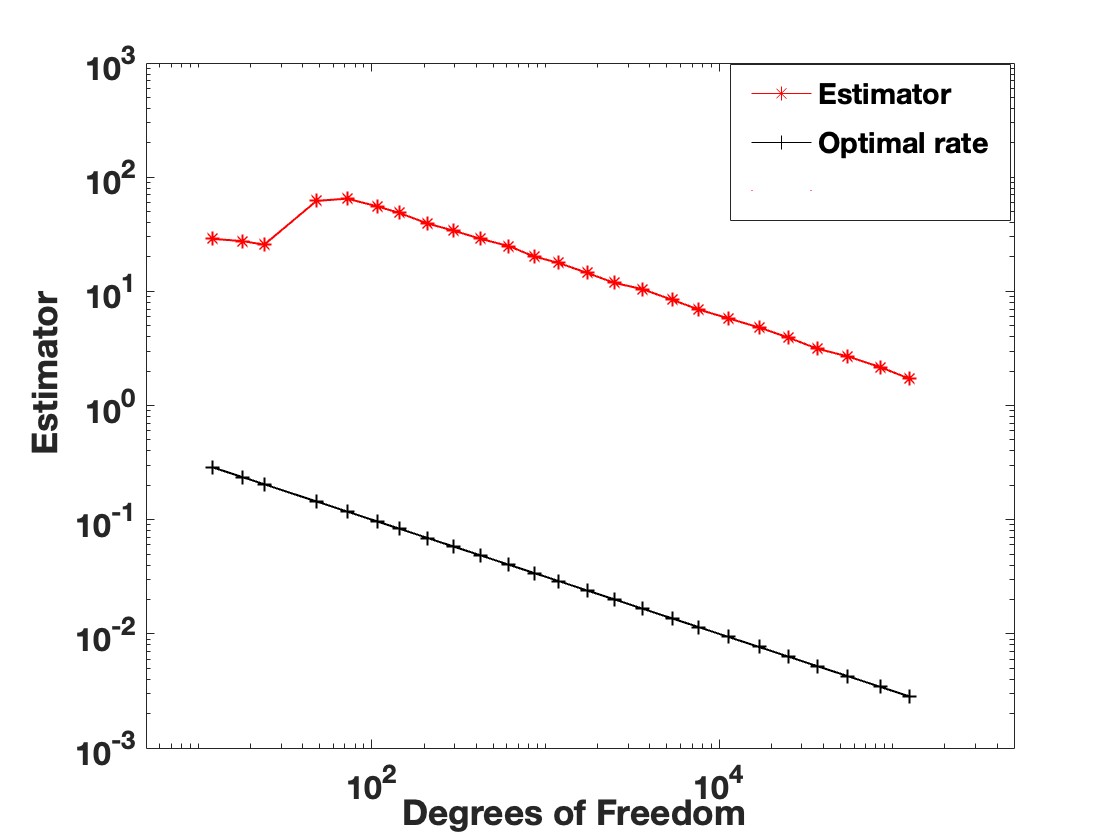} demonstrates that the error estimator $\eta_h$ converges optimally for both load cases, $f=0$ and $f=-15$ with the increase in degrees of freedom. The corresponding adaptive meshes at refinement level 21 are illustrated in Figure \ref{Adpative mesh1}. The obstacle’s profile can be interpreted as two hills joined by a saddle. The contact region is observed to evolve with increasing load function $f$.
\par
\noindent
\begin{figure}
	\begin{subfigure}[b]{0.45\textwidth}
		\includegraphics[width=\linewidth]
		{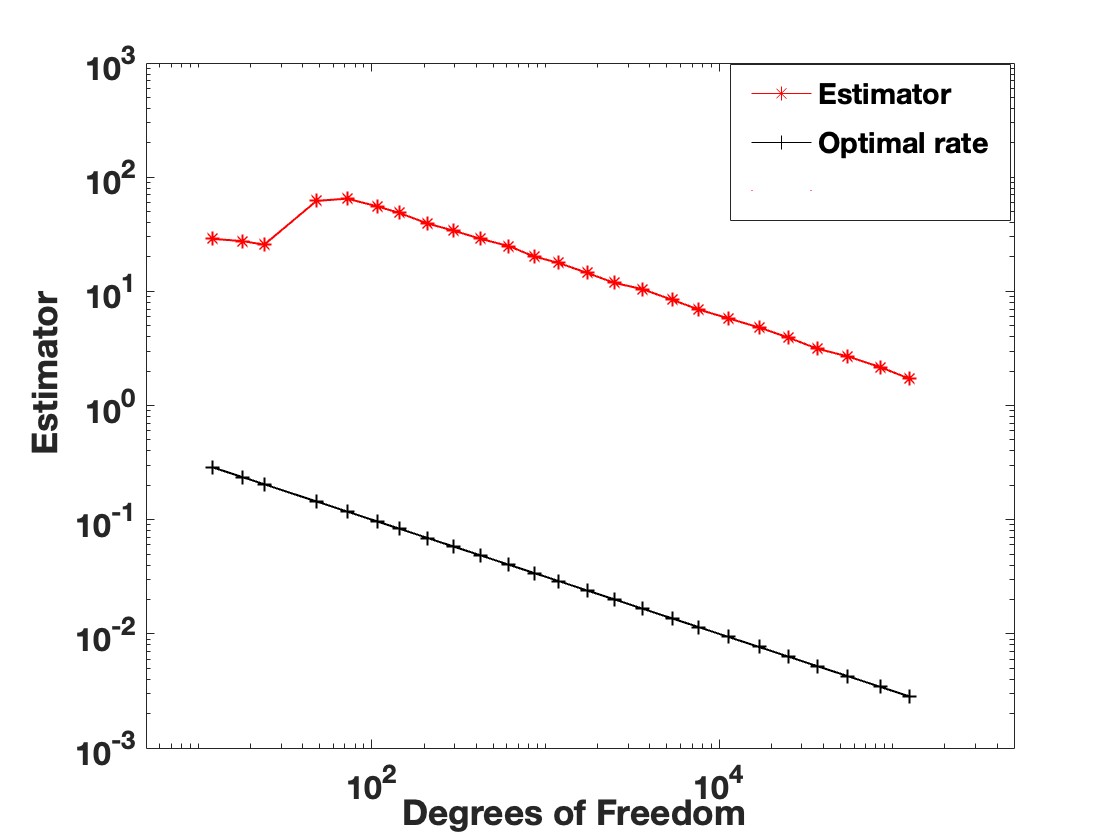}
		\caption{$f=0$}
	\end{subfigure}
	\begin{subfigure}[b]{0.45\textwidth}
		\includegraphics[width=\linewidth]{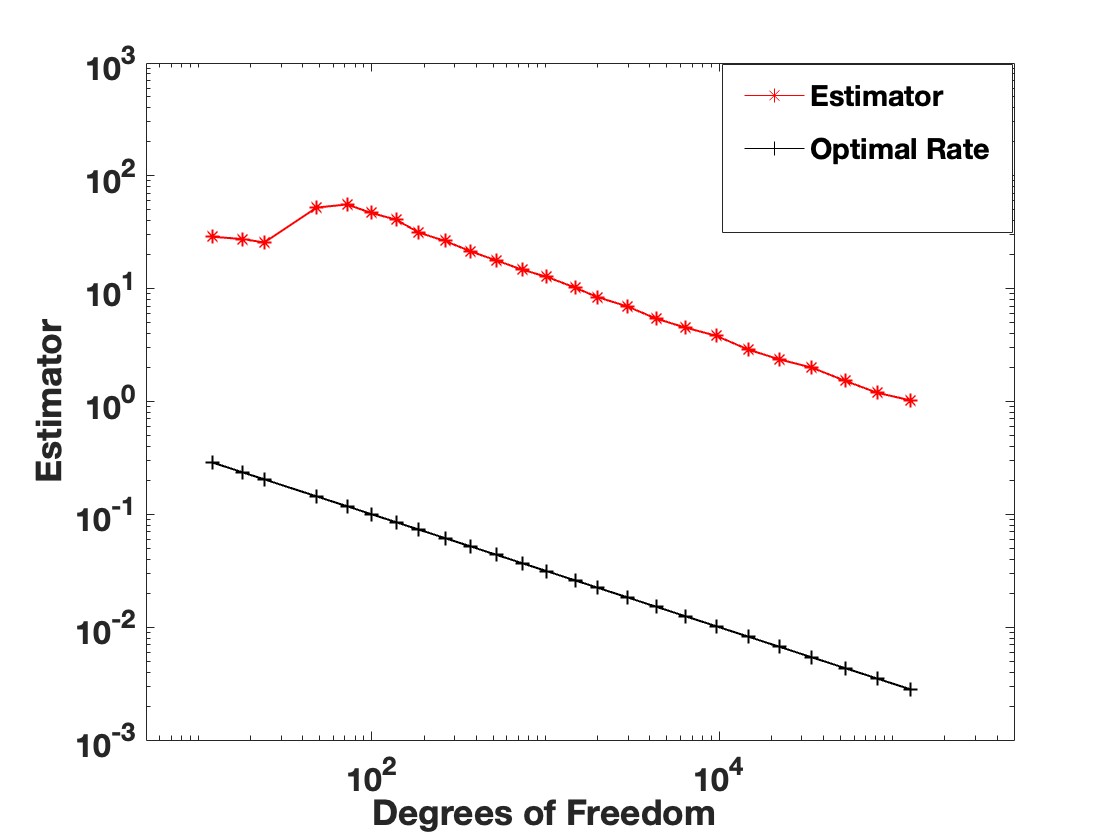}
		\caption{$f=-15$}
	\end{subfigure}
	\caption{Plot of error and residual error estimator  for Example 5.1.}\label{error_estimator_eg2.jpg}
\end{figure}
\begin{figure}
	\begin{subfigure}[b]{0.45\textwidth}
		\includegraphics[width=\linewidth]
		{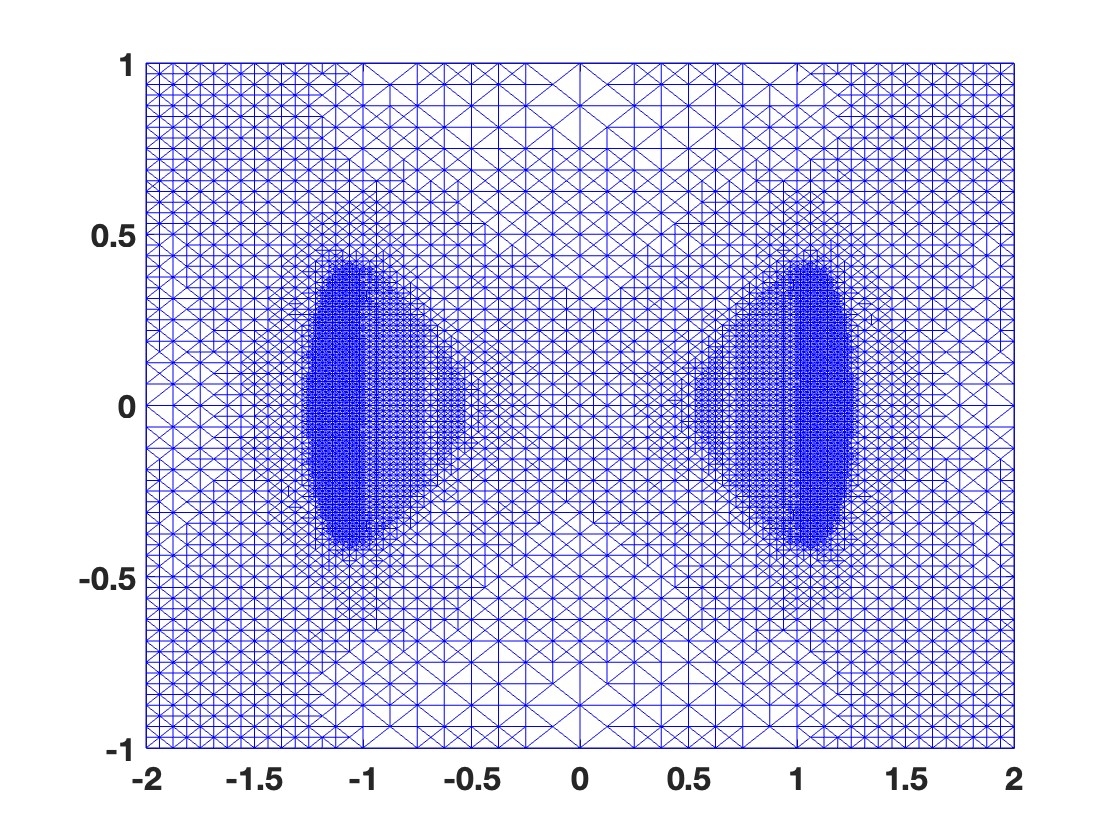}
		\caption{$f=0$}
	\end{subfigure}
	\begin{subfigure}[b]{0.45\textwidth}
		\includegraphics[width=\linewidth]{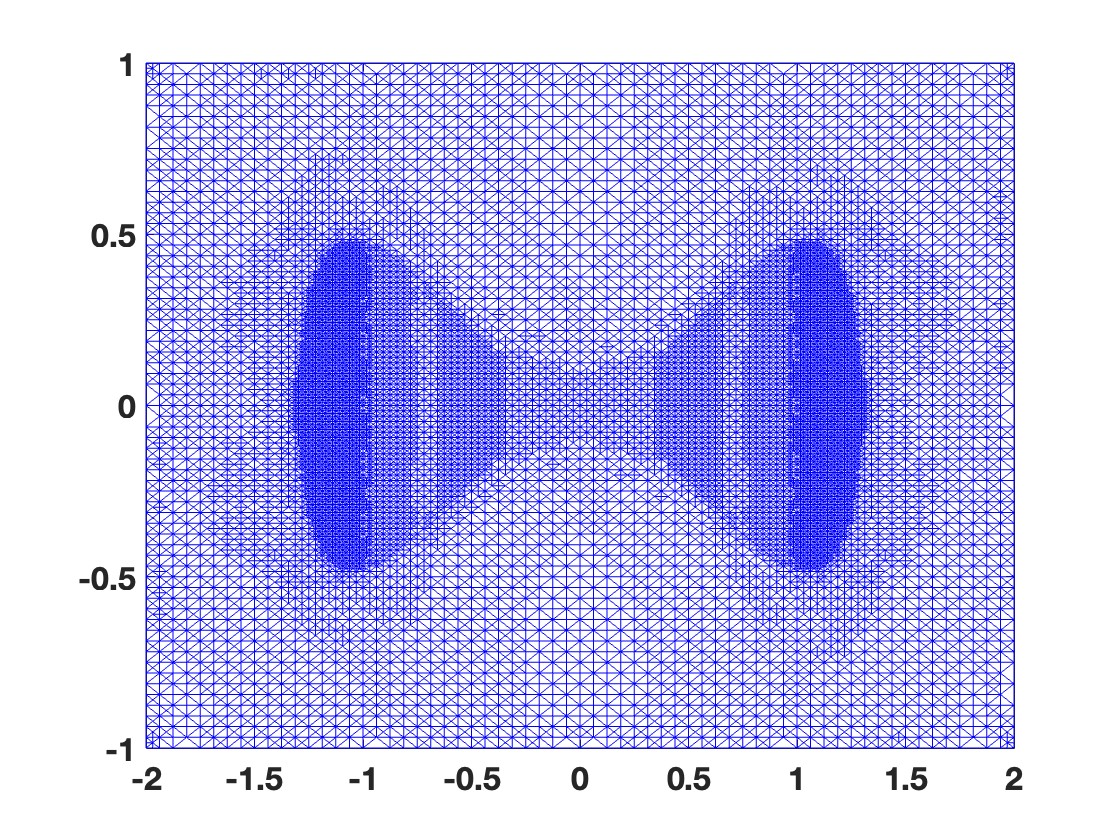}
		\caption{$f=-15$}
	\end{subfigure}
	\caption{Plot of adaptive mesh at certain refinement for Example 5.1.}\label{Adpative mesh1}
\end{figure}

\begin{figure}
		\includegraphics[height=9cm,width=10cm]
		{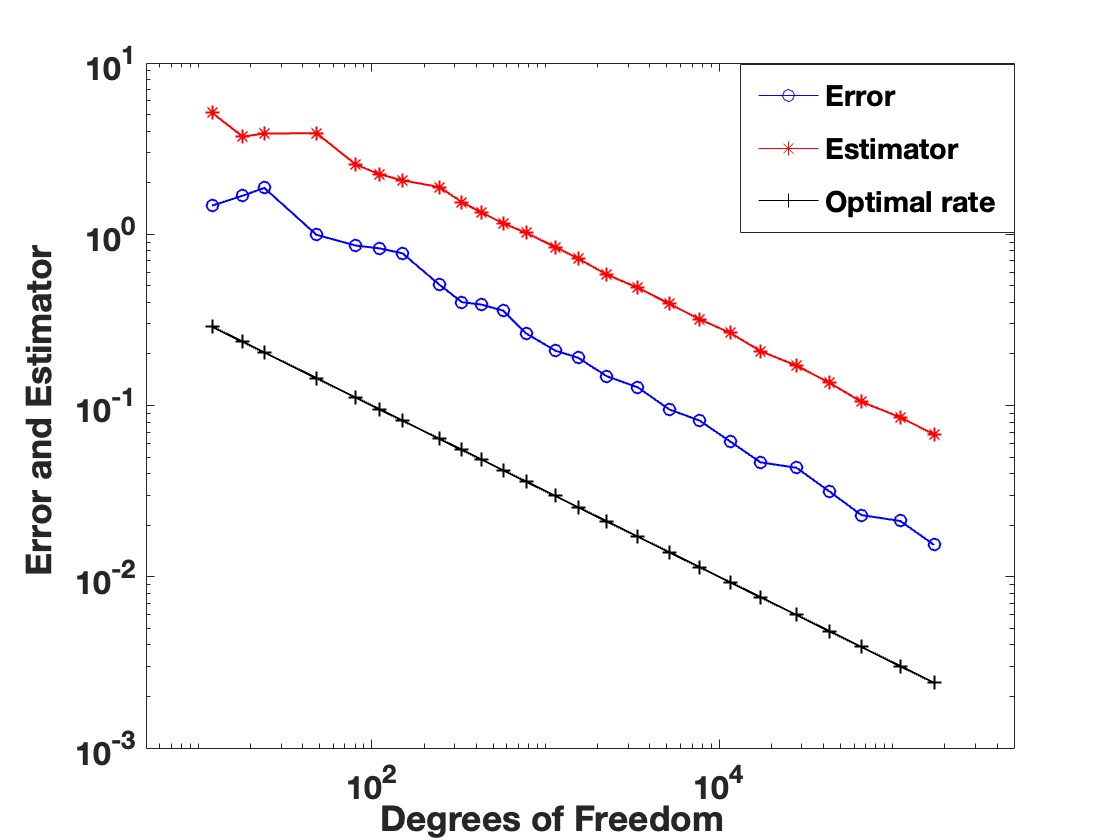}
	\caption{Plot of error and residual error estimator  for Example 5.2.}\label{error_estimator_eg1}
\end{figure}

\begin{figure}
		\includegraphics[height=9cm,width=10cm]
		{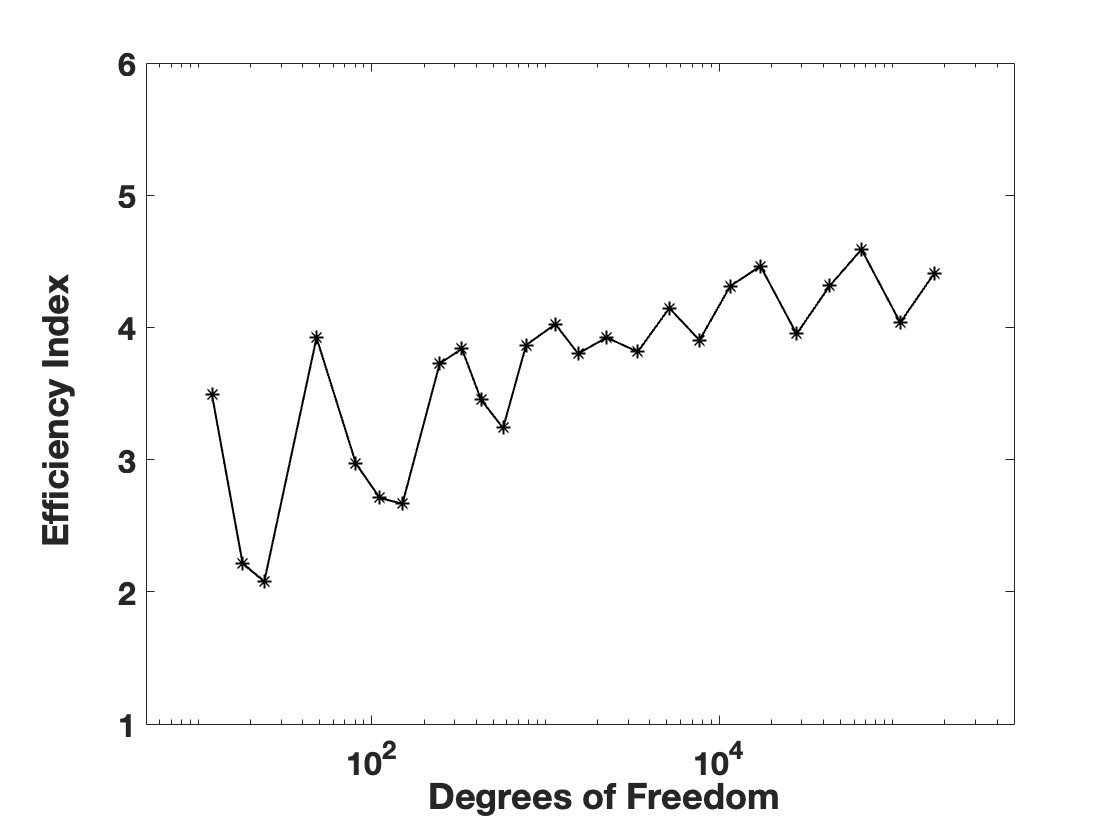}
	\caption{Plot of efficiency index for Example 5.2.}\label{efficiency_eg1}
\end{figure}

\begin{figure}
		\includegraphics[height=9cm,width=10cm]
		{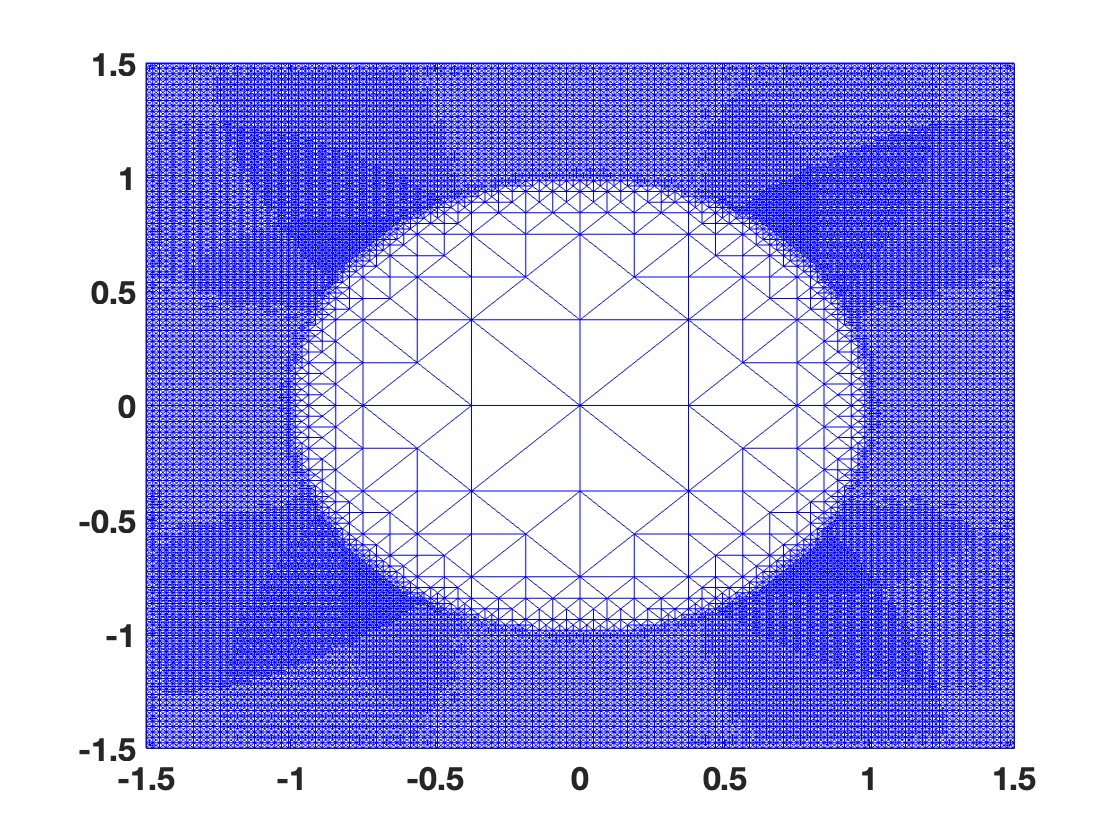}
	\caption{Plot of adaptive mesh at certain refinement for Example 5.2.}\label{adaptive_mesh_eg1}
\end{figure}

\begin{figure}
		\includegraphics[height=9cm,width=10cm]
		{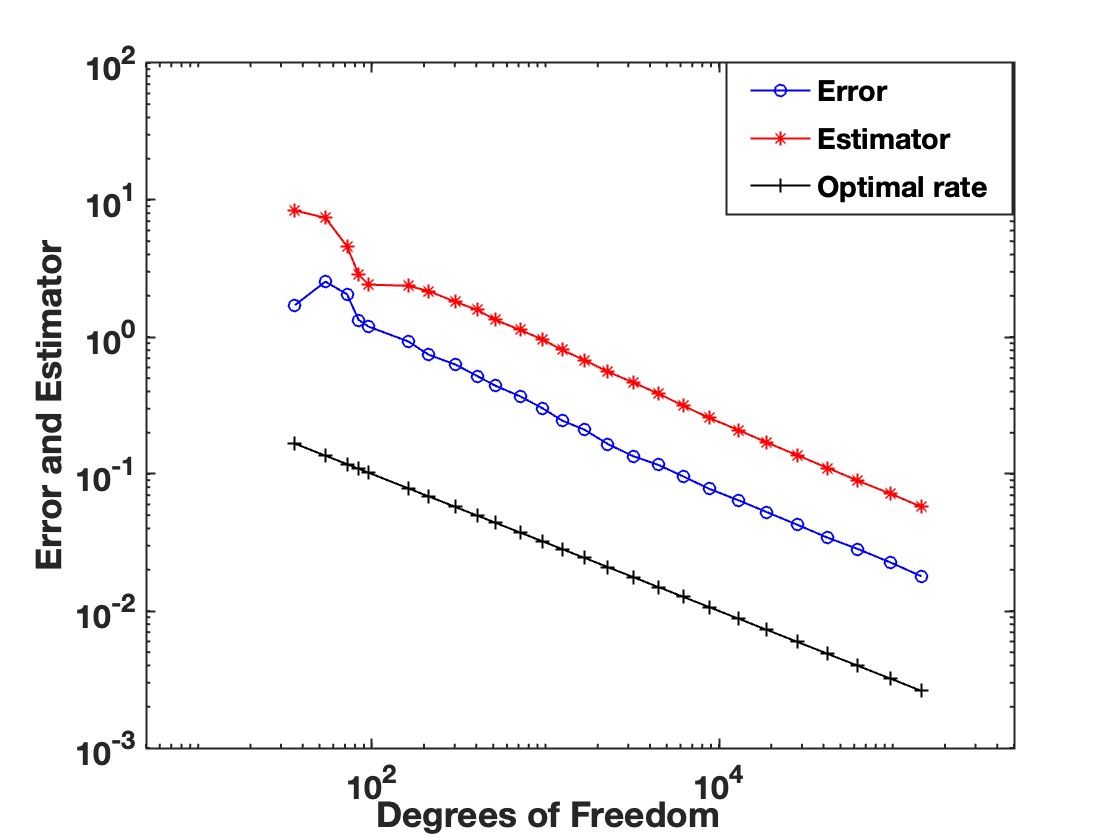}
	\caption{Plot of error and residual error estimator  for Example 5.3.}\label{error_estimator_eg4}
\end{figure}

\begin{figure}
		\includegraphics[height=9cm,width=10cm]
		{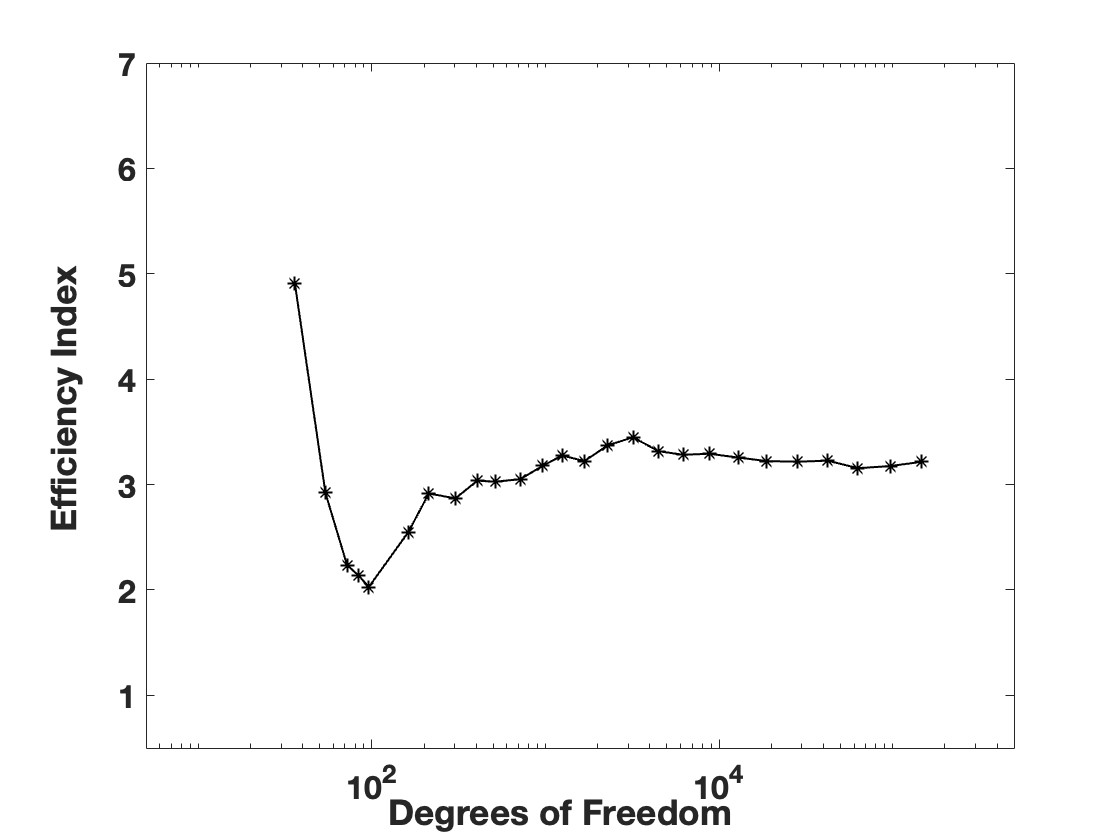}
	\caption{Plot of efficiency index for Example 5.3.}\label{efficiency_eg4}
\end{figure}

\begin{figure}
		\includegraphics[height=9cm,width=10cm]
		{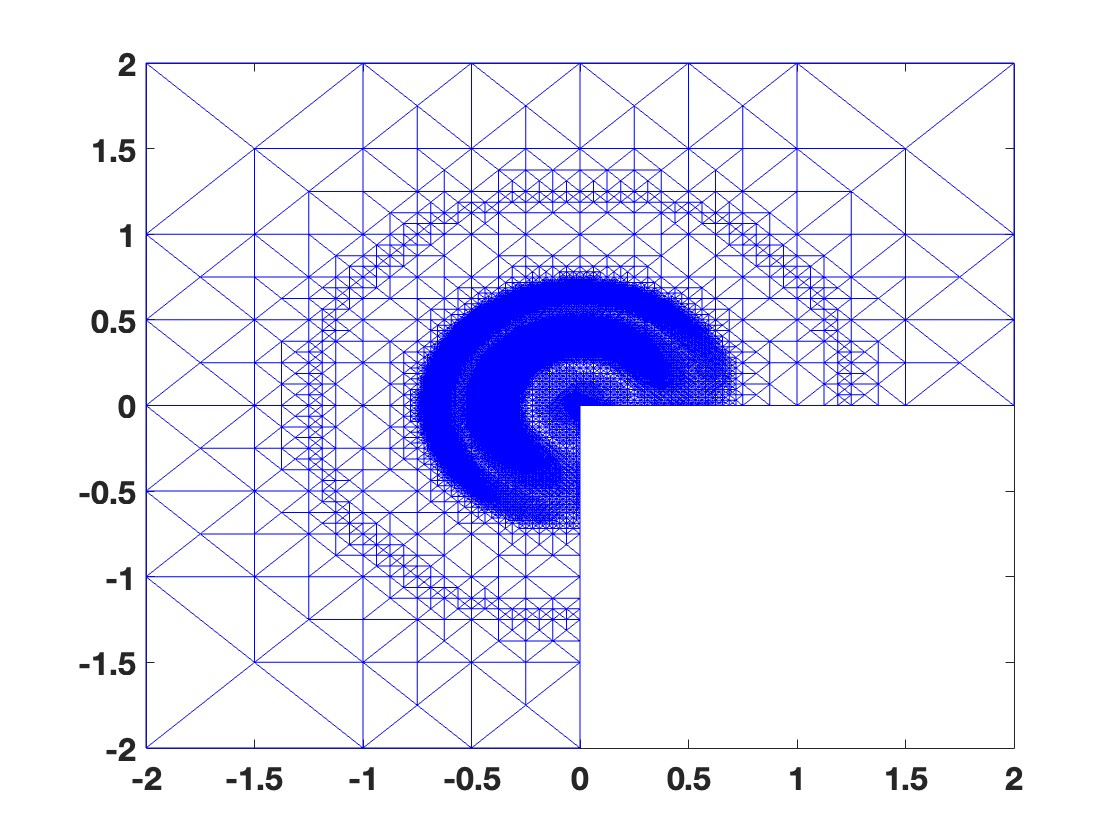}
	\caption{Plot of adaptive mesh at certain refinement for Example 5.3.}\label{adaptive _mesh_eg4}
\end{figure}

\begin{example} \label{ch5:ex}
	Let $\Omega=(-1.5,-1.5)^2$, $f=-2$, and
	$\chi:=0$. The exact solution is known and given by (taken from \cite{Bartels:2004}):
	\begin{equation*}
	u := \left\{ \begin{array}{ll} r^2/2-\text{ln}(r)-1/2, & r\geq 1\\\\
	0, & \text{otherwise}.
	\end{array}\right.
	\end{equation*}
	where
	$r^2=x^2+y^2$ for $(x,y)\in \mathbb{R}^2$.\\
\end{example}

Figure \ref{error_estimator_eg1} illustrates the optimal order decay of both the error and the estimator as the number of degrees of freedom increases, for Example \ref{ch5:ex}. This further corroborates the reliability of the estimator $\eta_h$. The efficiency index, calculated as the ratio of the estimator to the error, is shown in Figure \ref{efficiency_eg1}. The graph of the efficiency index is bounded both above and below by a generic constant, which guarantees that the residual estimator $\eta_h$ is not only reliable but also efficient. Finally, Figure \ref{adaptive_mesh_eg1} presents an adaptive mesh at a particular refinement level. As expected, we observe a higher level of refinement in the free boundary region.

\begin{example} \label{ch5:ex2} In the next model example motivated form \cite{Braess:2007}, we consider the following data over a L-shaped domain 
{	\small
	\begin{align*}
	\Omega &= (-2,2)^2 \setminus [ 0, 2) \times (-2, 0], \quad \chi=0, \\
	u &= r^{2/3} sin(2\theta/3 ) \gamma_1(r),\\
	f&=-r^{2/3} sin(2 \theta/3) \Big(\frac{\gamma_1^{'}(r)}{r} + \gamma_1^{''}(r) \Big)-\frac{4}{3}r^{-1/3}sin(2 \theta /3) \gamma_1^{'}(r)-\gamma_2(r),\\
	\text{where}\\
	\gamma_1(r) &= \begin{cases}  1, \quad  \tilde{r}<0 \\
	-6 \tilde{r}^5 + 15 \tilde{r}^4 -10 \tilde{r}^3 +1 , \quad  0 \leq \tilde{r} <1 \\
	0  , \quad \tilde{r} \geq 1,\\
	\end{cases}\\
	\gamma_2(r) &= \begin{cases} 0, \quad r \leq \frac{5}{4} \\
	1, \quad \mbox{otherwise}
	\end{cases}\\
	\text{with}\quad
	\tilde{r}= 2(r-1/4).
	\end{align*}}
\end{example}
Figure \ref{error_estimator_eg4} shows the behaviour of both the true error and the error estimator. It is observed that both the error and the estimator converge at the optimal rate with respect to the degrees of freedom. The efficiency of the error estimator is demonstrated in Figure \ref{efficiency_eg4}. In Figure \ref{adaptive _mesh_eg4}, the adaptively refined grid, steered by the residual error estimator for the MWG methods, is presented. As anticipated, the mesh is refined most heavily around the regions where the solution exhibits singular behaviour.

\par 
\noindent
 
\textbf{Acknowledgements:} The author gratefully acknowledges Prof. Kamana Porwal and Dr. Ritesh Singla for their valuable and insightful suggestions.\\
\par 
\noindent
\textbf{Declarations:} This manuscript has no associated data.


\begin{thebibliography}{10}


\bibitem{AO:2000:Book}
M.  Ainsworth and J.  T.  Oden,
\newblock {A posteriori error estimation in finite element analysis}.
\newblock Pure and Applied Mathematics (New York). Wiley-Interscience [John
  Wiley \& Sons],  New York,  2000.

\bibitem{AH:2009:VI}
K.  Atkinson and W.  Han,
\newblock {Theoretical Numerical Analysis. A functional analysis framework}.
Third edition, Springer, 2009.

\bibitem{BS:2015:hpAdapt}
L.  Banz and A.  Schr{\"o}der,
Biorthogonal basis functions in $hp$-adaptive FEM for elliptic obstacle problems.
{Computers and Mathematics with Applications},  70:1721-1742,  2015.

\bibitem{Bartels:2004}
S. Bartels and C. Carstensen,
\newblock{ Averaging techniques yield reliable a posteriori finite element error control for obstacle problems.} \newblock{Numerische Mathematik,} 99(2):225-249, 2004.





  \bibitem{D. Braess:2005:obst}
 D.  Braess,  
 \newblock{A posteriori error estimators for obstacle problems–another look.}
 \newblock{Numerische Mathematik,} 101:415-421, 2005.
 
 \bibitem{Braess:2007}
D. Braess, C. Carstensen, and R. H. Hoppe,
\newblock{ Convergence analysis of a conforming adaptive finite element method for an obstacle problem.}
\newblock{Numerische Mathematik,} 107(3):455–471, 2007.
 
 \bibitem{BScott:2008:FEM}
S.C.  Brenner and L.  R.  Scott,
\newblock { {The Mathematical Theory of Finite Element Methods $($Third
  Edition$)$}}.
\newblock Springer-Verlag, New York, 2008.

\bibitem{MA:2015:APVI}
M.  B\"urg and A.  Schr\"oder,
\newblock{A posteriori error control of $hp$-finite elements for variational inequalities of the first and second kind.}
\newblock{Computers and Mathematics with Applications},  70:2783–2802,  2015.

\bibitem{Ciarlet:1978:FEM}
P.  G.  Ciarlet,
\newblock { {The Finite Element Method for Elliptic Problems}}.
\newblock North-Holland, Amsterdam, 1978.

\bibitem{Cui: 2021: biharmonicmod}
M. Cui, X. Ye, and S. Y. Zhang, 
\newblock{A modified weak Galerkin finite element method for the biharmonic equation on polytopal meshes.}
\newblock{ Communications on Applied Mathematics and Computation,} 3:91–105, 2021.


\bibitem{Dorfler:1996:Afem}
W. D\"orlfer,
\newblock A convergent adaptive algorithm for Poisson's equation.
\newblock { SIAM Journal of Numerical Analysis},  33:1106--1124,  1996.

\bibitem{Du:2017:hemloboltz}
Y. Du and Z.M. Zhang,
\newblock{ A numerical analysis of the weak Galerkin method for the
Helmholtz equation with high wave number.}
\newblock{ Communications in Computational Physics,} 22(1):133–156, 2017.

\bibitem{Falk:1974:VI}
R. S. Falk, \newblock Error estimates for the approximation of a
class of variational inequalities.
\newblock{ Mathematics of Computation}, 28:963--971,
1974.

\bibitem {Gao:2020}
F.Z. Gao, S.Y. Zhang, and P. Zhu, 
\newblock{Modified weak Galerkin method with weakly imposed boundary condition for convection-dominated diffusion equations.}
\newblock{ Applied Numerical Mathematics,} 157:490–504, 2020.
\bibitem{Glowinski:2008:VI}
R.  Glowinski,
\newblock { Numerical Methods for Nonlinear Variational
Problems}. Springer-Verlag,  Berlin,  2008.

\bibitem{TG:2014:VIDG}
T.  Gudi and K.  Porwal,  \newblock A posteriori error control of
discontinuous Galerkin methods for elliptic obstacle problems.
\newblock{ Mathematics of Computation},  83:579--602,  2014.

\bibitem{TG:2014:VIDG1}
T.  Gudi and K.  Porwal,  \newblock A remark on the a posteriori
error analysis of discontinuous Galerkin methods for obstacle
problem.
\newblock{ Computational Methods in Applied Mathematics},  14:71--87, 2014.

\bibitem {Kporwal:2014:quad}
T.  Gudi and K.  Porwal,
\newblock{A reliable residual based a posteriori error estimator for a quadratic finite element method for the elliptic obstacle problem}.
\newblock{ Computational Methods in Applied Mathematics,} 15:145-160, 2014.



\bibitem{Primal_obstacle}
M.  Hinterm\"uller, K.  Ito, and K.  Kunish. 
\newblock{The primal-dual active set strategy as a semismooth Newton method}.
\newblock{ SIAM Journal on Optimization,}13:865--888,  2003.
\bibitem{Hu:2019:Stokes}
X. Hu, L. Mu, and X. Ye,
\newblock{ A weak Galerkin finite element method for the Navier-Stokes equations.} 
\newblock{Journal of Computational and Applied Mathematics,} 362:614–625, 2019.

\bibitem{Karakashian: 2007: modified}
O. A. Karakashian and F. Pascal,
\newblock{Convergence of adaptive discontinuous Galerkin approximations of second-order elliptic problems.}
\newblock{ SIAM Journal of Numerical Analysis,} 45:641–665, 2007.



\bibitem{Li:2022}
G. R. Li, Y. P. Chen, and Y.Q. Huang,
\newblock{A robust modified weak Galerkin finite element method for reaction-diffusion equations.}
\newblock{ Numerical Mathematics: Theory, Methods and Applications,} 15:68–90, 2022.

\bibitem{Liu:2018:Stokes}
X. Liu, J. Li, and Z. Chen, 
\newblock{A weak galerkin finite element method for the Navier-Stokes equations.}
\newblock{ Journal of Computational and Applied Mathematics}, 333:442–457, 2018.

\bibitem{Mu:2014:biharmonic}
L. Mu, J. Wang, and X. Ye, 
\newblock{Weak Galerkin ﬁnite element methods for the biharmonic equation on polytopal meshes.}
\newblock{ Numerical Methods for Partial Differential Equations,} 30:1003–1029, 2014.

\bibitem{Mu:2014:biharmonic1}
 L. Mu, J. Wang, X. Ye, and S. Y. Zhang, 
 \newblock{A $C^0$ -weak Galerkin finite element method for
the biharmonic equation.}
\newblock{ Journal of Scientific Computing,} 59(2):473–495, 2014.

\bibitem{Mu:2015:Helmholtz}
L. Mu, J. Wang, and X. Ye.
\newblock{ A new weak Galerkin ﬁnite element method for the Helmholtz equation.}
\newblock{IMA Journal of Numerical Analysis}, 35:1228–1255, 2015.



\bibitem{Mu:2015:maxwell}
L. Mu, J. Wang, X. Ye, and S. Zhang,
\newblock{A weak Galerkin ﬁnite element method for the Maxwell equations.}
\newblock{Journal of Scientific Computing,} 65:363–386, 2015.

\bibitem{Nochetto:2005}
R. H. Nochetto, K. G. Siebert, and A. Veeser,
\newblock{Fully localized a posteriori error estimators and barrier sets for contact problems.}
\newblock{ SIAM Journal on Numerical Analysis,} 42(5):2118–2135, 2005.

\bibitem {1963:Stampacchia}
G. Stampacchia,
\newblock{ On some regular multiple integral problems in the calculus of variations.}
\newblock{ Communications on Pure and Applied Mathematics,}
16(4):383-421, 1963.

\bibitem {Tian:2018}
T. Tian, Q. L. Zhai, and R. Zhang, 
\newblock{A new modified weak Galerkin finite element scheme for solving the stationary Stokes equations.}
\newblock{ Journal of Computational and Applied Mathematics,} 329:268–279, 2018.


\bibitem{Veeser:2001:VI}
A.  Veeser,
\newblock {Efficient and Reliable a posteriori error estimators for elliptic obstacle problems}.
\newblock { SIAM Journal of Numerical Analysis},  39:146--167,  2001.


\bibitem{Verfurth:1994:Adaptive}
R.  Verf{\"u}rth,
\newblock A posteriori error estimation and adaptive mesh-refinement
  techniques.
\newblock In { Proceedings of the Fifth International Congress on
  Computational and Applied Mathematics (Leuven, 1992)}, 50:67-83,  1994.
  

  \bibitem{quad:2002}
L.  H.  Wang,
\newblock{On the quadratic finite element approximation to the obstacle problem}.
\newblock{Numerische Mathematik,} 92:771-778,  2002.
  

  \bibitem{WHE:2015:ApostDG}
F.  Wang,  W.  Han,  J.  Eichholz, and X.  Cheng, \newblock {{ A
posteriori} error estimates for discontinuous Galerkin methods of obstacle problems}.\newblock { Nonlinear Analysis: Real World Applications},
22:664-679,  2015.

\bibitem{Wang:2013:biharmonic}
C. M. Wang and J.P. Wang, 
\newblock {An efficient numerical scheme for the biharmonic equation by weak Galerkin
finite element methods on polygonal or polyhedral meshes.}
\newblock{ Computers and Mathematics with Applications}, 68(12):2314–2330, 2013.

\bibitem{Wang_2013}
J. Wang and X. Ye,
\newblock{A weak Galerkin ﬁnite element method for second-order elliptic problems.}
\newblock{Journal of Computational and Applied Mathematics}, 241:103-115, 2013.

\bibitem{Wang:2016:Stokes}
J. Wang and X. Ye,
\newblock{ A weak Galerkin finite element method for the Stokes equations.}
\newblock{ Advances in Computational Mathematics,} 42(1):155–174, 2016.

\bibitem{Wang_2014}
X. Wang, N. S. Malluwawadu, F. Gao, and T. C. McMillan,
\newblock{A modified weak Galerkin ﬁnite element method.}
\newblock{Journal of Computational and Applied Mathematics}, 271:319–327, 2014.




\bibitem{WW:2010:Apost}
A. Weiss and B. I. Wohlmuth,
\newblock {{ A posteriori} error estimator for obstacle problems}.
\newblock { SIAM Journal of Numerical Analysis}, 32:2627--2658, 2010.

\bibitem{Xie:2022}
Y. Xie, L. Q. Zhong, and Y. P. Zeng,
\newblock{Convergence of an adaptive modified WG method
for second-order elliptic problem.}
\newblock{ Numerical Algorithms,} 90(2):789–808, 2022.

\bibitem{Yingying:2023}
Y. Xie, S. Cao, L. Chen, and L. Zhong,
\newblock{Convergence and optimality of an adaptive modified weak Galerkin finite element method.}
\newblock{Numerical Methods for Partial Differential Equations,} 39(5), 3847-3873, 2023.



\bibitem{Xie: 2023:modified}
Y. Xie, L. Zhong, and M. Tang,
\newblock{A posteriori error estimate for a modified weak Galerkin method of 2D $\b{H}$(curl) elliptic problems.}
\newblock{Computers and Mathematics with Applications,} 142:307-321, 2023.


\bibitem{Zeng:2017:modobstacle}
Y. Zeng, J. Chen, and F. Wang,
\newblock{Convergence analysis of a modified weak Galerkin finite element method for Signorini and obstacle problems.}
\newblock{Numerical Methods for Partial Differential Equations,} 33:1459–1474, 2017.

\bibitem{Zhang:2017}
T. Zhang and T. Lin,
\newblock{A posteriori error estimate for a modified weak Galerkin method solving elliptic problems.}
\newblock{Numerical Methods for Partial Differential Equations,} 33(1):381–398, 2017.



















































  
  


  





























































\end{thebibliography}
\end{document}